\documentclass[10pt,legno]{article}
\usepackage{amsmath, amssymb}
\usepackage{latexsym}
\setlength{\textwidth}{140mm} \setlength{\textheight}{200mm}
\setlength{\oddsidemargin}{11mm} \setlength{\evensidemargin}{11mm}


\begin{document}
\newcommand{\qed}{\hfill \ensuremath{\square}}
\newtheorem{thm}{Theorem}[section]
\newtheorem{cor}[thm]{Corollary}
\newtheorem{lem}[thm]{Lemma}
\newtheorem{prop}[thm]{Proposition}
\newtheorem{defn}[thm]{Definition}
\newcommand{\proof}{\vspace{1ex}\noindent{\em Proof}. \ }
\newtheorem{pro}[thm]{proof}
\newtheorem{ide}[thm]{Idee}
\newtheorem{rem}[thm]{Remark}
\newtheorem{ex}[thm]{Example}
\bibliographystyle{plain}
\numberwithin{equation}{section}
\numberwithin{equation}{section}
\newcounter{saveeqn}
\newcommand{\subeqn}{\setcounter{saveeqn}{\value{equation}}%
 \stepcounter{saveeqn}\setcounter{equation}{0}
\renewcommand{\theequation}{\mbox{\arabic{section}.\arabic{saveeqn}\alph{=
equation}}}} 

\newcommand{\reseteqn}{\setcounter{equation}{\value{saveeqn}}%
\renewcommand{\theequation}{\arabic{section}.\arabic{equation}}}

\def\nm{\noalign{\medskip}}
\newcommand{\Om}{\Omega}
\newcommand{\om}{\omega}
\newcommand{\Real}{\mathbb{R}}
\newcommand{\nuu}{\tilde{\nu}}
\newcommand{\bohm}{{\partial}{\ohm}}
\newcommand{\la}{\langle}
\newcommand{\ra}{\rangle}
\newcommand{\ms}{\mathcal{S}_\ohm}
\newcommand{\mk}{\mathcal{K}_\ohm}
\newcommand{\mks}{\mathcal{K}_\ohm ^{\ast}}
\newcommand{\grad}{\bigtriangledown}
\newcommand{\ds}{\displaystyle}
\newcommand{\pf}{\medskip \noindent {\sl Proof}. ~ }
\newcommand{\p}{\partial}
\renewcommand{\a}{\alpha}
\newcommand{\z}{\zeta}
\newcommand\q{\quad}
\newcommand{\pd}[2]{\frac {\p #1}{\p #2}}
\newcommand{\pdl}[2]{\frac {\p^2 #1}{\p #2}}
\newcommand{\dbar}{\overline \p}
\newcommand{\eqnref}[1]{(\ref {#1})}
\newcommand{\na}{\nabla}
\newcommand{\ep}{\epsilon}
\newcommand{\vp}{\varphi}
\newcommand{\fo}{\forall}
\newcommand{\Scal}{\mathcal{S}}
\newcommand{\Dcal}{\mathcal{D}}
\newcommand{\Kcal}{\mathcal{K}}
\newcommand{\Acal}{\mathcal{A}}
\newcommand{\Ecal}{\mathcal{E}}
\newcommand{\Ncal}{\mathcal{N}}
\newcommand{\Abar}{\overline A}
\newcommand{\Rcal}{\mathcal{R}}
\newcommand{\Lcal}{\mathcal {L}}
\newcommand{\Gcal}{\mathcal {G}}
\newcommand{\Cbar}{\overline C}
\newcommand{\Ebar}{\overline E}
\newcommand{\RR}{\mathbb{R}}
\newcommand{\CC}{\mathbb{C}}
\newcommand{\NN}{\mathbb{N}}
\newcommand{\Z}{\mathbb{Z}}

\title{ Reconstructing  Small Perturbations of an  Obstacle
 for Acoustic Waves from Boundary Measurements  on the Perturbed Shape Itself}

\date{}

\author{Habib Zribi (zribi.habib@yahoo.fr)\thanks{Departement of Mathematics, College of Science,
University of Hafr Al Batin, P.O. 1803, Hafr Al Batin 31991, Saudi Arabia.}}

\maketitle

\begin{abstract}
We derive relationships between the shape deformation  of an impenetrable obstacle and  boundary measurements
of scattering fields on the perturbed shape itself.
Our derivation is rigourous by using systematic way, based  on layer potential techniques and the field expansion (FE) method (formal derivation).
We extend these techniques to derive asymptotic expansions  of the  Dirichlet-to-Neumann (DNO) and Neumann-to-Dirichlet (NDO) operators in terms of the small perturbations of the obstacle  as well as  relationships between the shape deformation  of an  obstacle and boundary measurements
of DNO or  NDO  on the perturbed shape itself. All  relationships lead us to very effective algorithms for determining lower-order Fourier coefficients of the shape perturbation of the obstacle.

\end{abstract}

\noindent {\footnotesize Mathematics Subject Classification
(MSC2000): 35B30, 35R30, 35C20, 35B40}

\noindent {\footnotesize Keywords: Acoustic scattering, Small boundary perturbations, Asymptotic expansions, boundary integral method, Helmholtz equation}

\section{Introduction and statements of main results}
 Let us consider a situation, where we have an incident  wave $u^{in}$ propagating in a homogeneous isotropic medium $\RR^n$ for $n=2$ or $3$,  contains a bounded scatterer $D$ with
$C^{2}$-boundary,  which is either a sound-soft or a sound-hard impenetrable
obstacle. The wave  will scatter by the obstacle and we can express
the total wave  field around  the object as the sum of $u^{in}$ and a scattered wave $u^s$. The behavior of the scattered wave  will depend on both the incident wave and the shape  and the physical properties of the object.
The most inverse shape problems are to determine the shape of an object from measurements of scattered waves. Since the scattering field $u^s$ satisfies
\begin{equation}\label{u}
\left\{
  \begin{array}{ll}
\ds \Delta u^{s}+k^2 u^{s} =0 \quad \mbox{ in } \RR^n \backslash \overline{D}, \\
\nm \ds u^{s}=-u^{in} \quad \big(\mbox{or } \pd{u^s}{\nu}=-\pd{u^{in}}{\nu}\big)\quad \mbox{on  }  \p D,\\
\nm \ds \Big|\frac{\p u^{s}}{\p |x|}-ik u^{s}\Big|=O\big(|x|^{-\frac{n+1}{2}}\big) \q\mbox{as } |x|\rightarrow \infty,
  \end{array}
\right.
\end{equation}
where the wave number  $k>0$ and $\nu$ is the unit outward normal to the domain $D$.

Let  $D_\ep$ be an $\ep$-perturbation of $D$,  {\it i.e.}, there is a
function $h\in C^{1}(\p D)$ such that $\p D_\ep$ is given by
\begin{align*}
\ds \partial {D_\ep}=\big \{\tilde{x}=x+\ep h(x)\nu(x):=\Psi_\ep (x) | x \in \p D
\big \}.
\end{align*}
Let $u_{\ep}^{s}$  be the scattered field by $D_\ep$  which satisfies
\begin{equation}\label{uep}
\left\{
  \begin{array}{ll}
\ds \Delta u_{\ep}^{s}+k^2 u_{\ep}^{s} =0 \quad \mbox{in } \RR^n\backslash \overline{D_{\ep}}, \\
\nm \ds
u_{\ep}^{s}=-u_{i} \q \big (\mbox{or } \pd{u_{\ep}^s}{\nu}=-\pd{u^i}{\nu}\big)\quad \mbox{on  } \p D_\ep,\\
\nm \ds \Big|\frac{\p u_{\ep}^{s}}{\p |x|}-ik u_{\ep}^{s}\Big|=O\big(|x|^{-\frac{n+1}{2}}\big) \q\mbox{as } |x|\rightarrow \infty.
  \end{array}
\right.
\end{equation}

In this work, we consider the inverse acoustic obstacle scattering problems involve reconstructing the shape perturbation  of an obstacle from measurements of scattered fields. These inverse scattering problems are considerably more difficult to solve because they are nonlinear and ill-posed: the solution has an unstable dependence on the input data.  We propose a  way to determine the shape perturbation of an obstacle $D$ from boundary measurements on the perturbed obstacle $ D_\ep$, we get   relationships between the shape deformation $h$ and measurements of $u_\ep^s$ and $\p{u^s_\ep}/{\p \nu}$ on $\p D_\ep$.
In connection with our work, we should mention \cite{LLZ} on the reconstructing
small perturbations of bounded  scatterers    from electric or acoustic
far-field   measurements and  \cite{CMM,BHY,ZZ} on the reconstructing
of  locally small perturbations of half plan  from  acoustic
far-field or near-field  measurements.

Let $(v,w) \in H^{1}(\p D_\ep)\times H^{1}(\p D) $, we define 
\begin{align}\label{definition1}
\ds[v, w, \Psi_\ep, D]:=&\int_{\p D}\pd{v}{ \nu}\circ \Psi_\ep(x)  w(x) d\sigma(x) - \int_{\p D}v\circ \Psi_\ep(x) \pd{w}{ \nu}(x) d\sigma(x) \nonumber\\
\nm \ds= &\int_{\p D}\pd{v}{ \nu}(\tilde x)  w(x) d\sigma(x) - \int_{\p D}v(\tilde x) \pd{w}{ \nu}(x) d\sigma(x).
\end{align}
We denote by  $v^s$ the solution of the following system
\begin{equation}\label{v}
\left\{
  \begin{array}{ll}
\ds \Delta v^{s}+k^2 v^{s} =0 \quad \mbox{in } \RR^n \backslash \overline{D}, \\
\nm  \ds \Big|\frac{\p v^{s}}{\p |x|}-ik v^{s}\Big|=O\big(|x|^{-\frac{n+1}{2}}\big)\q\mbox{as } |x|\rightarrow \infty.
  \end{array}
\right.
\end{equation}

The main results of this paper is the following theorem, a rigourous derivation of the leading order term in the asymptotic expansion of $[u_\ep^s,v^s,\Psi_\ep, D]$  as $\ep \rightarrow 0$, based on the FE method and layer potential techniques.
\begin{thm} \label{main-Theorem}  Let  $u^s$,  $u^s_\ep$, and $v^s$ be the solutions of \eqref{u}, \eqref{uep}, and \eqref{v}, respectively. For the case
 of a sound-soft obstacle, we suppose that $k^2$
is not an eigenvalue of $-\Delta$ on $D$ with Neumann boundary condition and $u^{in} \in \mathcal{C}^1(\p D)$, while for the case of a sound-hard obstacle, we suppose that $k^2$
is not an eigenvalue of $-\Delta$ on $D$ with Dirichlet boundary condition and $u^{in} \in \mathcal{C}^2(\p D)$.
 The following  asymptotic  expansions  hold:
\begin{align}
\ds [u_\ep^s, v^s, \Psi_\ep, D]=\ep \int_{\p D}h\bigg[\frac{\p u^{s}}{\p T}\frac{\p v^{s}}{\p T}+(n-1)\tau \pd{u^{s}}{\nu} v^{s}
-\pd{u^{s}}{\nu} \pd{v^{s}}{\nu}-k^2 u^s v^s\bigg]d\sigma+O(\ep^2),
\label{main-Theorem-equality}
\end{align}
where  $T$ is the tangential vector to $\p D$ and $\tau$  is the mean curvature of $D$. Here the remainder $O(\ep^2)$ depends  only
on the $\mathcal{C}^2$-norm of $X$, the $\mathcal{C}^1$-norm of $h$,  and $k$.
\end{thm}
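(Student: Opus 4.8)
The plan is to obtain \eqref{main-Theorem-equality} by three conceptual moves — a reciprocity (Green) identity that forces $[u_\ep^s,v^s,\Psi_\ep,D]$ to be $O(\ep)$ and fixes its shape, a first‑order expansion in $\ep$ of the geometry produced by the pull‑back $\Psi_\ep$, and one integration by parts on $\p D$ — with rigour (and the role of the hypotheses) supplied by a layer‑potential analysis of how $u_\ep^s$ depends on $\ep$. Steps~1--3 below are the field‑expansion computation; Step~4 is what makes it rigorous.

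\emph{Step 1 (reduction via Green's identity).} For $\ep$ small, $u_\ep^s$ and $v^s$ are both radiating solutions of $\Delta w+k^2w=0$ in the exterior of $D_\ep$ — for $v^s$ one uses that a solution of \eqref{v} lives on the thin collar between $\p D$ and $\p D_\ep$ (and, where $h$ changes sign, one localises this according to the sign of $h$, which does not affect the leading term). Green's second identity and the Sommerfeld conditions then give $\int_{\p D_\ep}\big(\p_{\nu_\ep}u_\ep^s\,v^s-u_\ep^s\,\p_{\nu_\ep}v^s\big)\,d\sigma=0$. Writing $\tilde x=\Psi_\ep(x)$ and letting $J_\ep$ be the surface Jacobian of $\Psi_\ep\colon\p D\to\p D_\ep$, we pull this back to $\p D$ and subtract it from \eqref{definition1} to get
\begin{align*}
[u_\ep^s,v^s,\Psi_\ep,D]=\int_{\p D}\p_{\nu_\ep}u_\ep^s(\tilde x)\,\big(v^s(x)-J_\ep(x)v^s(\tilde x)\big)\,d\sigma(x)-\int_{\p D}u_\ep^s(\tilde x)\,\big(\p_\nu v^s(x)-J_\ep(x)\,\p_{\nu_\ep}v^s(\tilde x)\big)\,d\sigma(x),
\end{align*}
where each parenthesis vanishes at $\ep=0$, so only leading‑order information is needed from the remaining factors.

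\emph{Steps 2--3 (first‑order expansion and integration by parts).} Insert on $\p D$ the first‑order expansions of $J_\ep$ and of $\nu_\ep\circ\Psi_\ep$ (a tangential tilt by $-\ep\,\na_{\p D}h$) together with the Taylor expansions of $v^s(\tilde x)$ and $\na v^s(\tilde x)$, and use the decomposition of $\Delta$ near $\p D$ into its normal and tangential parts (which brings in $(n-1)\tau$) together with $\Delta v^s=-k^2v^s$ to eliminate $\p_\nu^2v^s$; a short calculation turns the first parenthesis above into $-\ep h\big(\p_\nu v^s-(n-1)\tau v^s\big)+O(\ep^2)$ and the second into $\ep\big(h k^2v^s+h\,\Delta_{\p D}v^s+\p_Th\,\p_Tv^s\big)+O(\ep^2)$, where $\Delta_{\p D}$ is the Laplace--Beltrami operator and $\p_Th\,\p_Tv^s$ denotes the inner product of the tangential gradients. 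Since these parentheses are $O(\ep)$ and, by the next step, $\p_{\nu_\ep}u_\ep^s(\tilde x)=\p_\nu u^s(x)+O(\ep)$ and $u_\ep^s(\tilde x)=u^s(x)+O(\ep)$, the $u_\ep^s$‑factors may be replaced by the corresponding $u^s$‑factors up to $O(\ep^2)$, giving
\begin{align*}
[u_\ep^s,v^s,\Psi_\ep,D]=-\ep\int_{\p D}h\,\p_\nu u^s\big(\p_\nu v^s-(n-1)\tau v^s\big)\,d\sigma-\ep\int_{\p D}u^s\big(hk^2v^s+h\,\Delta_{\p D}v^s+\p_Th\,\p_Tv^s\big)\,d\sigma+O(\ep^2).
\end{align*}
On the closed surface $\p D$, integration by parts gives $-\ep\int_{\p D}h\,u^s\,\Delta_{\p D}v^s\,d\sigma=\ep\int_{\p D}u^s\,\p_Th\,\p_Tv^s\,d\sigma+\ep\int_{\p D}h\,\p_Tu^s\,\p_Tv^s\,d\sigma$; the first term here cancels the $-\ep\int_{\p D}u^s\,\p_Th\,\p_Tv^s\,d\sigma$ already present, and collecting the surviving terms gives exactly \eqref{main-Theorem-equality}.

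\emph{Step 4 (the main obstacle: the $\ep$‑dependence of $u_\ep^s$).} The technical heart, and the reason for the hypotheses, is to show — with all remainders controlled only by $\|X\|_{\mathcal{C}^2}$, $\|h\|_{\mathcal{C}^1}$ and $k$ — that $u_\ep^s=u^s+O(\ep)$ on the relevant boundaries in $\mathcal{C}^1$ for a sound‑soft obstacle and in $\mathcal{C}^2$ for a sound‑hard one, and that the $O(\ep^2)$ terms above are uniform. I would represent $u_\ep^s$ on $\p D_\ep$ by a double‑layer potential in the sound‑soft case and by a single‑layer potential in the sound‑hard case; the corresponding boundary integral equation is uniquely solvable precisely when $k^2$ is not a Neumann, respectively Dirichlet, eigenvalue of $-\Delta$ on $D_\ep$, which for small $\ep$ follows from the stated assumption on $D$ since $D_\ep$ is an $\ep$‑perturbation of $D$. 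Transplanting the equation to $\p D$ by $\Psi_\ep$ and expanding the Helmholtz single‑ and double‑layer operators in $\ep$ (their kernels depend smoothly on the geometry off the diagonal, and the diagonal singularity is handled by the standard change‑of‑variables estimates), one obtains uniform invertibility of the perturbed operator for small $\ep$ and concludes that the density, hence the pulled‑back Cauchy data of $u_\ep^s$, is an $O(\ep)$ perturbation of that of $u^s$; this is where $u^{in}\in\mathcal{C}^1(\p D)$, respectively $u^{in}\in\mathcal{C}^2(\p D)$, enters. Because Steps~1--3 never use the boundary condition on $\p D_\ep$, the sound‑soft and sound‑hard cases differ only at Step~4, which I expect to absorb the bulk of the work; Steps~1--3 are the reciprocity identity plus bookkeeping.
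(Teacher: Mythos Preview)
Your Steps~2--4 are correct and close to the paper's, but Step~1 is organised differently in a way that creates an avoidable difficulty. The paper never applies Green's identity on $\p D_\ep$; it works entirely on $\p D$. In the formal derivation it Taylor-expands $u_\ep^s(\tilde x)$ and $\p_{\nu_\ep}u_\ep^s(\tilde x)$ back to $\p D$ (via $u_\ep^s=u^s+\ep u_1+\cdots$), inserts these directly into \eqref{definition1}, and then uses Lemma~\ref{Important-Lemma} on $\p D$ twice --- once to kill the $O(1)$ term $\int_{\p D}(\p_\nu u^s\,v^s-u^s\,\p_\nu v^s)\,d\sigma$ and once to kill the auxiliary $O(\ep)$ term $\int_{\p D}(\p_\nu u_1\,v^s-u_1\,\p_\nu v^s)\,d\sigma$. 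The rigorous version (Section~\ref{proof-theorem1}) replaces the formal $u_1$ by the first-order terms $\Scal_{D,1}^k$, $\Kcal_{D,1}^k$, $\Dcal_{D,1}^k$, $\Acal_{D,1}^k$ in the layer-potential expansions of $\Scal^k_{D_\ep}$, $\Dcal^k_{D_\ep}$ pulled back by $\Psi_\ep$, applied to $\phi=\p_\nu u_\ep^s\circ\Psi_\ep$ and $\psi=u_\ep^s\circ\Psi_\ep$. In both versions $v^s$ is only ever evaluated on $\p D$, so no extension of $v^s$ and no sign-of-$h$ case analysis is needed. Your Step~1 instead needs $v^s$ to be a radiating Helmholtz solution outside $D_\ep$ and $C^2$ up to $\p D_\ep$; where $h<0$, the point $\tilde x$ lies inside $D$ where $v^s$ is not defined, and a generic $C^2$ extension $\tilde v^s$ gives only $(\Delta+k^2)\tilde v^s=o(1)$ near $\p D$ under a $C^2$-boundary hypothesis, so the volume correction to the Green identity is $o(\ep)$, not $O(\ep^2)$. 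The ``localise according to the sign of $h$'' fix is not obviously rigorous at the precision you need. Swapping your reciprocity identity from $\p D_\ep$ to $\p D$, and correspondingly expanding the $u_\ep^s$-data rather than the $v^s$-data, reproduces the paper's argument with no extra work; your Step~4 already furnishes the zeroth-order input this requires.

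On Step~4: this is essentially the paper's Lemma~\ref{estimes-Energy-Lemma}. The paper uses the Green representation $u_\ep^s=\Scal^k_{D_\ep}[\p_\nu u_\ep^s]-\Dcal^k_{D_\ep}[u_\ep^s]$ rather than a pure double- or single-layer ansatz, and proves only the $L^2(\p D)$ estimates $\|u_\ep^s\circ\Psi_\ep-u^s\|_{L^2}+\|\p_\nu u_\ep^s\circ\Psi_\ep-\p_\nu u^s\|_{L^2}\le C\ep$; by Cauchy--Schwarz this is all Steps~1--3 actually need, so your claimed $\mathcal{C}^1$/$\mathcal{C}^2$ closeness is stronger than necessary. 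The invertibility input is the same as yours: $-\tfrac12 I+(\Kcal_D^k)^*$ for sound-soft and $\tfrac12 I+\Kcal_D^k$ for sound-hard, precisely under the stated eigenvalue hypotheses.
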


The  term in the left-hand side of  \eqref{main-Theorem-equality}
 can be determined by measurements as following:
 $\p{v^s}/{\p \nu}(x_i)$ and $v^s(x_i)$ are computed on  some locations $\{x_i\}$
 on the boundary $\p D$ which are supposed to be present before small perturbations of the shape $ D$
 and therefore the perturbed locations under the deformation can be  used  to measure
 the  fields  $\p{u_\ep^s}/{ \p \nu}(\tilde {x_i})$ and $u_\ep^s(\tilde {x_i})$ on $\p D_\ep$.
Our asymptotic expansions are still valid in the case of small perturbations of a locally half plan
  ($\tau=0$) and an obstacle of a small volume ($\tau\sim1/\ep$), but more elaborate arguments are needed for proofs. We derive relationships similar to \eqref{main-Theorem-equality} between the shape deformation  of an  obstacle and boundary measurements of   DNO or  NDO  on the perturbed shape itself.

Assuming that the unknown  object boundary is a small perturbation of a circle or a ball. The relationships between the shape deformation  of an  obstacle and one of boundary measurements of scattered fields,    DNO, and   NDO  are used for determining
lower-order Fourier coefficients of the shape perturbation of the object.

These relationships  could be used  to develop  effective algorithms to determine certain properties of the shape
perturbation of an impenetrable obstacle based on boundary measurements on the perturbed shape itself and to design new tools  for solving shape optimization problems: the idea would be to compute the gradient of some target functional using our asymptotic expansions with respect to the shape of the object.
To do  this, we refer to asymptotic  formulae related to  measurements in the same sprit,
 generalized polarization tensors and modal measurements that have been obtained in the recent papers \cite{ABFKL,AKLZ2}.


In this paper, we mainly focus on the derivation of the theorem \ref{main-Theorem} in two dimensions by  systematic way,  based  on the FE method and layer potential techniques. We   prove Theorem \ref{main-Theorem} in three dimensions   by the FE method,  it can be done by layer potential techniques  in exactly the same manner as in two dimensional  case  by
using \cite{KZ2}. We extend these techniques to  derive asymptotic expansions  of  the DNO and NDO in terms of the small  perturbations of the object shape.

This paper is organized as follows: In section $2$, we formally derive the asymptotic expansions in \eqref{main-Theorem-equality}  by using the FE method (Theorem \ref{main-Theorem}). In section $3$, we review some definitions and preliminary results on the layer potentials for Helmholtz equation and  derive asymptotic expansions of layer potentials. In section $4$, based on layer potential techniques we prove that in fact the formal expansion holds in two dimensions (Theorem \ref{main-Theorem}). In section $5$, we rigourously derive asymptotic expansions  for the NDO and DNO   as well as  relationships between the shape deformation  $h$ and measurements
of   DNO or  NDO. In the last section,  we present  algorithms to determine
the shape deformation $h$.
\section{Formal derivations: FE method}
 The following lemma is of use to us. See,  for instance \cite{book-CK,book-Kirsch}.

\begin{lem}\label{Important-Lemma}  Let $v_j$  satisfy \eqref{v}
for  $j=1,2$. Then
\begin{align}\label{main-equality}
\ds \int_{\p D}\Big(\pd{v_1}{ \nu}  v_2- v_1  \pd{v_2}{ \nu} \Big)d\sigma=0.
\end{align}
\end{lem}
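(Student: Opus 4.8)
The plan is to prove Lemma \ref{Important-Lemma} by a standard application of Green's second identity on an exterior domain, combined with the Sommerfeld radiation condition satisfied by both $v_1$ and $v_2$. First I would fix a large ball $B_R$ of radius $R$ centered at the origin containing $\overline D$, and apply Green's second identity to $v_1$ and $v_2$ on the bounded region $\Omega_R := B_R \setminus \overline D$. Since $\Delta v_j + k^2 v_j = 0$ in $\Omega_R$, the volume term $\int_{\Omega_R}(v_1 \Delta v_2 - v_2 \Delta v_1)\,dx = \int_{\Omega_R} (-k^2 v_1 v_2 + k^2 v_2 v_1)\,dx = 0$ vanishes identically, so the boundary integral over $\partial \Omega_R = \partial B_R \cup \partial D$ vanishes. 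Being careful with orientations (the outward normal to $\Omega_R$ points into $D$ on $\partial D$ and radially outward on $\partial B_R$), this gives
\begin{align*}
\ds \int_{\p D}\Big(\pd{v_1}{\nu} v_2 - v_1 \pd{v_2}{\nu}\Big)d\sigma = \int_{\p B_R}\Big(\pd{v_1}{\nu} v_2 - v_1 \pd{v_2}{\nu}\Big)d\sigma.
\end{align*}

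The remaining step is to show the right-hand side tends to $0$ as $R \to \infty$. On $\partial B_R$ the normal derivative is $\partial/\partial|x|$, so I would write $\partial v_j/\partial \nu = ik v_j + O(|x|^{-(n+1)/2})$ using the radiation condition in \eqref{v}, and substitute:
\begin{align*}
\ds \int_{\p B_R}\Big(\pd{v_1}{\nu} v_2 - v_1 \pd{v_2}{\nu}\Big)d\sigma = \int_{\p B_R}\Big((ik v_1 + O(R^{-\frac{n+1}{2}})) v_2 - v_1 (ik v_2 + O(R^{-\frac{n+1}{2}}))\Big)d\sigma.
\end{align*}
The two $ik v_1 v_2$ terms cancel, leaving $\int_{\partial B_R} O(R^{-\frac{n+1}{2}}) \cdot O(v_j)\, d\sigma$. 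Here one uses that any radiating solution of the Helmholtz equation satisfies $v_j = O(|x|^{-(n-1)/2})$ (which follows from the radiation condition, e.g. via the representation formula, or can be taken as part of the standing hypotheses on $v^s$); combined with the surface measure growth $|\partial B_R| = O(R^{n-1})$, each term is $O(R^{n-1} \cdot R^{-(n-1)/2} \cdot R^{-(n+1)/2}) = O(R^{-1}) \to 0$. Hence the left-hand side, which does not depend on $R$, must equal $0$.

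The only genuine subtlety — the ``main obstacle,'' though it is a mild one — is justifying the decay estimate $v_j = O(|x|^{-(n-1)/2})$ and the uniformity of the $O(|x|^{-(n+1)/2})$ bound on the sphere; this is the classical fact (due to Rellich/Sommerfeld) that a solution of the Helmholtz equation in an exterior domain satisfying the Sommerfeld radiation condition automatically has this pointwise decay, and it is exactly the content invoked by the references \cite{book-CK,book-Kirsch} cited before the lemma. Granting that, the argument is purely Green's-identity bookkeeping. Alternatively, and perhaps more cleanly, one can invoke directly the standard lemma from scattering theory that for two radiating solutions $\int_{\partial B_R}(\overline{v_1}\,\partial_\nu v_2 - v_2\,\partial_\nu \overline{v_1})\,d\sigma \to 0$; here both functions are genuine solutions (not conjugated), so the cancellation of the $ikv_1v_2$ terms is what makes the limit vanish rather than produce an energy flux term.
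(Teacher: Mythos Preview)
Your argument is correct and is exactly the standard proof: Green's second identity on $B_R\setminus\overline D$ plus the Sommerfeld radiation condition to kill the far-field boundary term. The paper does not actually supply a proof of this lemma---it simply cites \cite{book-CK,book-Kirsch}---and what you have written is precisely the argument found in those references.
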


Let $u_\ep$ be the solution to \eqref{uep}. In order to derive a formal asymptotic expansion for $u_\ep$, we apply the FE method, see \cite{NR, LLZ, Z, KZ2}. Firstly, we expand $u_\ep$ in powers of $\ep$, ${i.e.}$
\begin{equation}\label{heatExp}
\ds u_\ep^s(x)=u_0(x)+\ep u_1(x)+\ep^2 u_2(x)+\cdots,\quad x\in
\RR^d\backslash \overline{D_\ep},
\end{equation}
where  $u_l$  satisfies
\begin{equation}\label{u-l}
\left\{
  \begin{array}{ll}
\ds  \Delta u_{l}+k^2 u_{l} =0 \quad \mbox{in } \RR^d \backslash \overline{D}, \\
\nm \ds
 \Big|\frac{\p u_l}{\p |x|}-ik u_l\Big|=O\big(|x|^{-\frac{d+1}{2}}\big)\q\mbox{as }|x|\rightarrow \infty.
  \end{array}
\right.
\end{equation}
From $u_\ep(\tilde x)=-u^{in}(\tilde x)$ $\big(\mbox{or }\p u_\ep/\p \nu(\tilde x)=-\p u^{in}/\p \nu(\tilde x)\big)$ for $ \tilde x \in \p D_\ep$, we get  $u_0(x)=-u^{in}(x)$   $\big(\mbox{or }\p u_0/\p \nu( x)=-\p u^{in}/\p \nu( x)\big)$ for $x \in \p D.$ Note that $u_0\equiv u^s.$

\textbf{{Two dimentional case}:} Let $a,b \in \RR$, with $a<b$ and let $X(t):[a,b]\rightarrow \RR^2$
be the arclength parametrization of $\p D$, namely,  $X$ is a $\mathcal{C}^2$-function
 satisfying $|X^{'}(t)|=1$ for all $t\in [a,b]$ and such that
\begin{align*}
\ds  \p D:=\{x=X(t), t\in [a,b]\},
\end{align*}
with  $X^{'}(t)=T(x)$ and $X^{''}(t)=\tau (x) \nu(x).$

By $\frac{d}{dt}$, we denote the tangential derivative in the direction of $T(x)$. Let $\phi(x)\in \mathcal{C}^2([a,b])$ for $x=X(\cdot)\in \p D$. We have
\begin{align*}
\ds \frac{d\phi}{dt}(x)=\pd{\phi}{T}(x), \q\q\q  \Big(\frac{d}{dt} \Big)^2\phi(x)=\frac{\p^2 \phi}{\p T^2}(x)+\tau(x) \pd{\phi}{\nu}(x).
\end{align*}
As  a consequence, the restriction of $\Delta +k^2$ in $\RR^2\backslash \p D$
to a neighbourhood of $\p D$ can be expressed as follows:
\begin{align}\label{Laplacian-local}
\ds \Delta+k^2=\frac{\p^2}{\p \nu^2}-\tau \pd{}{\nu}+\Big(\frac{d}{dt} \Big)^2+k^2 \q \mbox{on } \p D.
\end{align}
We will sometimes use $h(t)$ for $h(X(t))$ and $h^{'}(t)$ for the tangential derivative of $h(x)$. Then,
$\tilde x =\Tilde X(t)=X(t)+\ep h(t) \nu(x)$ is a parametrization of $\p D_\ep$.

Let $x\in \p D$, then $\tilde x=x+\ep h(x)\nu(x)\in \p D_\ep$. It was proved in \cite{AKLZ1} that
$\nu(\tilde x)=\nu(x)-\ep h^{'}(t) T(x)+O(\ep^2)$. Using the  Taylor expansion and \eqref{Laplacian-local}, we write
\begin{align}\label{Taylor-normaluep}
\ds \pd{u^s_\ep}{\nu}(\tilde x)&=\Big(\nabla u^s(x)+\ep h(x)\nabla^2 u^s(x)\nu(x)+\ep \nabla u_1( x)\Big)\cdot \Big(\nu(x)-\ep h^{'}(t) T(x)\Big)+O(\ep^2)\nonumber\\
\nm \ds  &= \pd{u^s}{\nu}(x)-\ep \frac{d}{dt}\Big(h(x)\frac{d u^s}{dt}(x)\Big)+\ep \tau(x) h(x) \pd{u^s}{\nu}(x)
+ \ep \pd{u_1}{\nu}(x)-\ep k^2 h(x) u^s(x)\nonumber\\
\nm \ds &\q +O(\ep^2),
\end{align}
and
\begin{align}\label{Taylor-uep}
u^s_\ep(\tilde x) = u^s(x)+\ep h(x) \pd{u^s}{\nu}(x)+ \ep u_1(x)+O(\ep^2).
\end{align}
It follows from \eqnref{Taylor-normaluep} that
\begin{align}\label{first-term}
\ds \int_{\p D}\pd{u^s_\ep}{\nu}(\tilde x) v^s(x)d\sigma(x)&=\int_{\p D}\pd{u^s}{\nu} v^sd\sigma+\ep\int_{\p D}\pd{u_1}{\nu}  v^sd\sigma\nonumber\\
\nm \ds &\q+\ep \int_{\p D}h\bigg( \frac{\p u^s}{\p T} \frac{\p v^s}{\p T}
+\tau\pd{u^s}{\nu}  v^s-k^2 u^s   v^s\bigg) d\sigma+O(\ep^2).
\end{align}
According to \eqref{Taylor-uep}, we have
\begin{align}\label{second-term}
\ds \int_{\p D}u^s_\ep(\tilde x)  \pd{v^s}{\nu}(x)d\sigma(x)=&\int_{\p D}u^s \pd{v^s}{\nu}d\sigma +\ep\int_{\p D}u_1  \pd{v^s}{\nu}d\sigma+\ep \int_{\p D}h\frac{\p u^s}{\p \nu}  \frac{\p v^s}{\p \nu} d\sigma+O(\ep^2).
\end{align}
Subtracting \eqref{second-term} from \eqnref{first-term} yields
\begin{align}
\ds [u^s_\ep,v^s,\Psi_\ep, D]=&\ep \int_{\p D}h \Big(\frac{\p u^{s}}{\p T}\frac{\p v^{s}}{\p T}+\tau \pd{u^{s}}{\nu} v^{s}
-\pd{u^{s}}{\nu} \pd{v^{s}}{\nu}-k^2 u^s v^s\Big)d\sigma\nonumber\\
\nm \ds &+\int_{\p D}\Big(\pd{u^s}{\nu} v^s-u^s \pd{v^s}{\nu}\Big)d\sigma+\ep\int_{\p D}\Big(\pd{u_1}{\nu}  v^s -u_1 \pd{v^s}{\nu}\Big)d\sigma+O(\ep^2).\label{11}
\end{align}
By Lemma  \ref{Important-Lemma}, the second and the third  integrals in the right-hand side of \eqnref{11} vanish. Thus Theorem \ref{main-Theorem}
is proved formally in two dimensions. For proof see Section \ref{proof-theorem1}.

\textbf{Three dimensional case:} Let $\vartheta$ be an open subset of $\RR^2$. Let $X(\varphi, \theta)$ be
an orthogonal parametrization  of  the surface $\p D$, that is, $$\p D:=\big\{x= X(\varphi, \theta), (\varphi, \theta)\in \vartheta\big\}$$ for $X \in \mathcal{C}^2(\vartheta)$,  where $\big(X_\varphi:=\frac{dX}{d\varphi}\big)\cdot \big(X_\theta:=\frac{dX}{d\theta}\big)=0$. The vectors  $T_\varphi=X_\varphi/|X_\varphi|$ and $T_\theta=X_\theta/|X_\theta|$ form an  orthonormal basis for the tangent plane to $\p D$ at $x=X(\varphi, \theta)$. The tangential derivative on $\p D$ is defined  by $\pd{}{T}=\frac{\p }{\p T_\varphi}T_{\varphi}+\pd{}{T_\theta}T_{\theta}$.

Let   $\mathcal{{G}}$ be  the matrix of the first fundamental form with respect to the basis  $\{X_\varphi, X_\theta\}$ which is given by
\begin{equation*}
\ds \mathcal{G} =
\begin{pmatrix}
|X_\varphi|^2 & 0 \\
0 & |X_\theta|^2
\end{pmatrix}.
\end{equation*}
For $v\in \mathcal{C}^2(\vartheta)$. The gradient operator in local coordinates  satisfies
\begin{align}\label{local-gradient}
\nabla_{\varphi, \theta}v=\sqrt{\mathcal{G}_{11}}\pd{v}{T_\varphi}T_\varphi+\sqrt{\mathcal{G}_{22}}\pd{v}{T_\theta}T_\theta,\q
\mathcal{G}^{-1} \nabla_{\varphi, \theta}v=\frac{1}{\sqrt{\mathcal{G}_{11}}}\pd{v}{T_\varphi}T_\varphi+\frac{1}{\sqrt{\mathcal{G}_{22}}}
\pd{v}{T_\theta}T_\theta,
\end{align}
and the  restriction of $\Delta +k^2$ in $\RR^3\backslash \p{D}$
to a neighbourhood of $\p D$ can be expressed as follows:
\begin{align*}
\Delta v+k^2 v=\frac{\p^2v}{\p \nu^2}-2\tau \pd{v}{\nu}+\frac{1}{\sqrt{det\mathcal{G}}}
\nabla_{\varphi, \theta}\cdot \Big(\sqrt{det\mathcal{G}} \mathcal{G}^{-1}\nabla_{\varphi, \theta} v\Big)+k^2 v \q \mbox{on } \p D.
\end{align*}

We use $h(\varphi, \theta)$ for simplifying the term $h(X(\varphi, \theta))$
and $h_\varphi(\varphi, \theta)$, $h_\theta(\varphi, \theta)$ for the tangential derivatives of $h(X(\varphi, \theta))$. Then,
$\tilde x =X(\varphi, \theta)+\ep h(\varphi, \theta) \nu(x)$ is a parametrization of $\p D_\ep$.
 It was proved in \cite{KZ2} that
$$\nu(\tilde x)=\nu(x)-\ep\Big( \frac{h_{\varphi}}{\sqrt{\mathcal{G}_{11}}} T_\varphi+ \frac{h_{\theta}}{\sqrt{\mathcal{G}_{22}}} T_\theta\Big)+O(\ep^2).$$
Let $\tilde x=x+\ep h(x) \nu(x) \in \p D_\ep$ for $x\in \p D$. The following Taylor expansions  hold
\begin{align}\label{Taylor-normaluep-3D}
\ds \pd{u^s_\ep}{\nu}(\tilde x)&=\Big(\nabla u^s(x)+\ep h(x)\nabla^2 u^s(x)\nu(x)+\ep \nabla u_1( x)\Big)\cdot \nu(\tilde x)+O(\ep^2)\nonumber\\
\nm \ds  &= \pd{u^s}{\nu}(x)+2\ep \tau(x) h(x) \pd{u^s}{\nu}(x)
+ \ep \pd{u_1}{\nu}(x)-\ep k^2 h(x) u^s(x)\nonumber\\
\nm \ds &\q -\frac{\ep}{\sqrt{det\mathcal{G}}}
\nabla_{\varphi, \theta}\cdot \Big(h (x)\sqrt{det\mathcal{G}} \mathcal{G}^{-1}\nabla_{\varphi, \theta} u^s(x)\Big)+O(\ep^2),
\end{align}
and
\begin{align}\label{Taylor-uep-3D}
u^s_\ep(\tilde x) = u^s(x)+\ep h(x) \pd{u^s}{\nu}(x)+ \ep u_1(x)+O(\ep^2).
\end{align}
Inserting  the two expansions in  \eqnref{Taylor-normaluep-3D} and \eqref{Taylor-uep-3D} into  \eqnref{definition1}, we obtain
\begin{align}
\ds [u^s_\ep,v^s,\Psi_\ep, D]=&\ep \int_{\p D}h \Big(2\tau \pd{u^{s}}{\nu} v^{s}
-\pd{u^{s}}{\nu} \pd{v^{s}}{\nu}-k^2 u^s v^s\Big)d\sigma\nonumber\\
\nm \ds &- \ep \int_{\p D} \frac{1}{\sqrt{det\mathcal{G}}}
\nabla_{\varphi, \theta}\cdot \Big(h \sqrt{det\mathcal{G}} \mathcal{G}^{-1}\nabla_{\varphi, \theta} u^s\Big) v^s d\sigma\nonumber\\
\nm \ds &+\int_{\p D}\Big(\pd{u^s}{\nu} v^s-u^s \pd{v^s}{\nu}\Big)d\sigma+\ep\int_{\p D}\Big(\pd{u_1}{\nu}  v^s -u_1 \pd{v^s}{\nu}\Big)d\sigma+O(\ep^2).\label{22}
\end{align}
According  to Lemma  \ref{Important-Lemma}, the fourth and the fifth  integrals  in the right-hand side of \eqnref{22} vanish. By integrating by parts and \eqref{local-gradient}, we find that 
\begin{align*}
\ds \int_{\p D} \frac{1}{\sqrt{det\mathcal{G}}}
\nabla_{\varphi, \theta}\cdot \Big(h \sqrt{det\mathcal{G}} \mathcal{G}^{-1}\nabla_{\varphi, \theta} u^s\Big) v^s d\sigma&=\int_{\vartheta}
\nabla_{\varphi, \theta}\cdot \Big(h \sqrt{det\mathcal{G}} \mathcal{G}^{-1}\nabla_{\varphi, \theta} u^s\Big) v^s d\varphi d\theta\\
\nm \ds &= - \int_{\vartheta}
h \sqrt{det\mathcal{G}} \mathcal{G}^{-1}\nabla_{\varphi, \theta} u^s \cdot \nabla_{\varphi, \theta}v^s d\varphi d\theta\\
\nm \ds &= - \int_{\p  D} h  \mathcal{G}^{-1}\nabla_{\varphi, \theta} u^s \cdot \nabla_{\varphi, \theta}v^s d\sigma\\
\nm \ds &= - \int_{\p  D} h  \Big(\pd{u^s}{T_\varphi} \pd{v^s}{T_\varphi}+\pd{u^s}{T_\theta} \pd{v^s}{T_\theta} \Big) d\sigma\\
\nm \ds &= - \int_{\p  D} h  \pd{u^s}{T}  \pd{v^s}{T} d\sigma.
\end{align*}
Thus Theorem \ref{main-Theorem}
is proved formally in three dimensions.

\section{Layer potentials for Helmholtz equation}

\subsection{Definitions and Preliminary results}
We start to review some basic facts in the theory of layer potentials.
Let $\Gamma_k(x)$ be the fundamental solution of $\Delta +k^2$ in $\RR^2$, that is  for $x\neq 0$,
\begin{align*}
\ds \Gamma_k(x)=-\frac{i}{4}H^1_0(k|x|),
\end{align*}
where $H^1_0$ is the Hankel  function of the first kind of order $0$. We have the following Taylor expansion  of $H^1_0(x)$ as $|x|\rightarrow 0$ \cite{book-Watson}:
\begin{align}\label{asymptotic-Hankel}
\ds -\frac{i}{4}H^1_0(k|x|)=\frac{1}{2\pi}\sum_{n=0}^{+\infty}(-1)^n \frac{k^{2n}}{2^{2n}(n!)^2}
|x|^{2n}\Big(\ln(|x|)+\ln(k \gamma)-\sum_{j=1}^{n}\frac{1}{j}\Big),
\end{align}
where $2\gamma=e^{\tilde \gamma -i\pi /2}$, and $\tilde \gamma$ is Euler's constant.

According to Leibniz's
rule, the $p$th  derivative of $r^{2n} \ln(r)$ is given by
\begin{align*}
\ds \big(r^{2n}\ln (r)\Big)^{(p)}=\sum_{l=0}^{p}C_{p}^l (r^{2n})^{(l)}\big(\ln(r)\big)^{(p-l)}=(r^{2n})^{(p)}\ln(r) +\sum_{l=0}^{p-1}C_{p}^l (r^{2n})^{(l)}\big(\frac{1}{r}\big)^{(p-l-1)},
\end{align*}
where $C_p^l$ is a binomial coefficient, and then, it follows from \eqref{asymptotic-Hankel} that
\begin{align*}
\ds -\frac{ik^p}{4}H_0^{1^{(p)}}(kr) r^p \mbox{ is continuous at zero  for } p\geq 1.
\end{align*}

For a bounded domain $D$ in $\RR^2$ and $k>0$ let $\Scal_{D}^k$ and $\Dcal_D^k$ be the single and double layer potentials defined by $\Gamma_k$, that is,
\begin{align*}
\ds \Scal_D^{k}[\phi](x)&=\int_{\p D}\Gamma_k(x-y)\phi (y) d\sigma(y), \q x\in \RR^2,\\
\nm\ds \Dcal_D^{k}[\phi](x)&=\int_{\p D} \pd{\Gamma_k(x-y)}{\nu(y)} \phi (y) d\sigma(y), \q x\in \RR^2\backslash \p D.
\end{align*}
It is well-known, see Theorem $3.1$ of \cite{book-CK}, that
\begin{align}
\ds \pd{\Scal_D^{k}[\phi]}{\nu}\Big |_{\pm}(x)&=\Big(\pm \frac{1}{2}I+ (\Kcal_D^{k})^*\Big)[\phi] (x) \q \mbox{a.e. } x\in \p D,\label{jump-normal-single-layer}\\
\nm\ds \Dcal_D^{k}[\phi]\Big |_{\pm}(x)&=\Big(\mp \frac{1}{2}I+ \Kcal_D^{k}\Big)[\phi] (x) \q \mbox{a.e. } x\in \p D, \label{jump-double-layer}
\end{align}
for $\phi \in L^2(\p D)$, where $\Kcal_D^{k}$ is the operator on $L^2(\p D)$  defined by
\begin{align*}
\ds \Kcal_D^{k}[\phi] (x)&=\mbox{p.v.} \int_{\p D}\pd{\Gamma_k(x-y)}{\nu(y)} \phi (y) d\sigma(y),
\end{align*}
and $ (\Kcal_D^{k})^*$ is the $L^2$-adjoint of $\Kcal_D^k$. Here p.v. denotes the cauchy principal value. The operator $\Kcal_D^k$ is known to be bounded on $L^2(\p D)$ \cite{CMM}.

If $D$ has a $\mathcal{C}^2$ boundary and $\phi \in H^{\frac{1}{2}}(\p D)$, then $\p (\Dcal_D^{k}[\phi])/\p \nu$ does not have a jump across $\p D$, that is,
\begin{align}\label{jump-normal-double-layer-potential}
\ds \pd{\Dcal_D^{k}[\phi]}{\nu}\Big|_{+}(x)=\pd{\Dcal_D^{k}[\phi]}{\nu}\Big|_{-}(x), \q x\in \p D.
\end{align}
Recall that the operators $\ds \frac{\p^2\Dcal^k_D[\phi]}{\p \nu^2}$,  $\ds \Big(\frac{d}{dt}\Big)^2\Dcal^k_D[\phi]$, and
$\Dcal^k_D[\phi]$ are not  continuous on $\p D$, but it follows from $(\Delta +k^2)\Dcal^k_D[\phi]=0$ in $\RR^2\backslash \p D$ and \eqref{Laplacian-local} that $\ds \frac{\p^2\Dcal^k_D[\phi]}{\p \nu^2}+ \Big(\frac{d}{dt}\Big)^2\Dcal^k_D[\phi]+k^2 \Dcal^k_D[\phi]$ is continuous on $\p D$ and we have
\begin{align}\label{jum-double-nu-squared}
\ds \frac{\p^2\Kcal^k_D[\phi]}{\p \nu^2}+ \Big(\frac{d}{dt}\Big)^2\Kcal^k_D[\phi]+k^2 \Kcal^k_D[\phi]=\tau  \pd{\Dcal^k_D[\phi]}{\nu}\q \mbox{on } \p D.
\end{align}
If $\phi \in \mathcal{C}^2(\p D)$, then we get from  \eqref{jump-double-layer} and \eqref{jum-double-nu-squared}  that
\begin{align*}
 \ds \frac{\p^2\Dcal^k_D[\phi]}{\p \nu^2}\Big|_{\pm}=\pm  \frac{1}{2}\Big(\frac{d}{dt}\Big)^2\phi
 \pm \frac{k^2}{2}\phi+\pdl{\Kcal^k_D[\phi]}{\nu^2}\q \mbox{on } \p D.
\end{align*}

The following  uniqueness result for the exterior Helmholtz problem holds. (See  \cite{book-CK,book-AFKRYZ}).
\begin{lem} \label{uniqness-lemma} Let $D$ be a bounded Lipschitz domain in $\RR^2$. Let $w \in H^{1}_{loc}(\RR^2\backslash \overline{D})$ satisfy
 $$
 \left\{ \begin{array}{l}
\ds \Delta w+k^2 w=0 \quad\mbox{in }\RR^2\backslash \overline{D},\\
\nm\ds\bigg|\pd{w}{r}-ik w \bigg|=O\Big(1/r^{\frac{3}{2}}\Big)\q \mbox{as } r=|x|\rightarrow +\infty \q \mbox{uniformy in }\frac{x}{|x|},\\
\nm\ds w=0 \mbox{ or  } \pd{w}{\nu}=0\quad\mbox{on } \p D.\end{array}\right.
 $$
 Then, $w\equiv0$ in $\RR^2\backslash \overline{D}$.
\end{lem}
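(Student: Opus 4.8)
The plan is to run the classical energy/Rellich argument for exterior Helmholtz problems (see \cite{book-CK}). First I would choose $R>0$ so large that $\overline D \subset B_R := \{|x|<R\}$, and apply Green's second identity to the pair $w,\overline w$ over the bounded domain $B_R\setminus\overline D$. Because $\Delta w+k^2w=0$ (hence $\Delta\overline w+k^2\overline w=0$ as well), the volume integrand $\overline w\,\Delta w-w\,\Delta\overline w$ vanishes identically, so the boundary contributions over $\p B_R$ and over $\p D$ must cancel. The $\p D$-term $\int_{\p D}\big(\overline w\,\pd{w}{\nu}-w\,\pd{\overline w}{\nu}\big)d\sigma$ is zero because either $w=0$ or $\pd{w}{\nu}=0$ on $\p D$; when $D$ is merely Lipschitz and $w\in H^1_{loc}$, this is read off from the weak formulation, with the Neumann trace taken in $H^{-1/2}(\p D)$ and Green's identity understood in the corresponding duality pairing. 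What remains is $\int_{\p B_R}\big(\overline w\,\pd{w}{r}-w\,\pd{\overline w}{r}\big)d\sigma=0$, that is,
\[
\mathrm{Im}\int_{\p B_R}\overline w\,\pd{w}{r}\,d\sigma=0 .
\]

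Next I would use the radiation condition quantitatively. Expanding the modulus gives
\[
\int_{\p B_R}\Big|\pd{w}{r}-ik\,w\Big|^2 d\sigma
=\int_{\p B_R}\Big(\big|\pd{w}{r}\big|^2+k^2|w|^2\Big)\,d\sigma
-2k\,\mathrm{Im}\int_{\p B_R}\overline w\,\pd{w}{r}\,d\sigma ,
\]
and the last term vanishes by the previous step. Since $|\pd{w}{r}-ik\,w|=O(r^{-3/2})$ uniformly in $x/|x|$ and $|\p B_R|=2\pi R$, the left-hand side is $O(R^{-2})$ and tends to $0$ as $R\to\infty$. Hence both $\int_{\p B_R}|w|^2\,d\sigma$ and $\int_{\p B_R}\big|\pd{w}{r}\big|^2\,d\sigma$ tend to $0$; in particular $\lim_{R\to\infty}\int_{|x|=R}|w|^2\,d\sigma=0$.

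Finally I would invoke \emph{Rellich's lemma}: any solution of $\Delta w+k^2w=0$ in the exterior of a ball whose $L^2$-mass on the spheres $|x|=R$ vanishes as $R\to\infty$ is identically zero outside that ball. This yields $w\equiv0$ in $\RR^2\setminus\overline{B_R}$. By interior elliptic regularity $w$ is real-analytic in the open connected set $\RR^2\setminus\overline D$, so the unique continuation principle propagates the vanishing from $\RR^2\setminus\overline{B_R}$ to all of $\RR^2\setminus\overline D$, which is the assertion. The only genuinely non-elementary ingredient is Rellich's lemma (proved via the far-field Bessel/spherical-harmonic expansion of $w$), which I would simply cite; everything else is bookkeeping with Green's identity. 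I expect the one point requiring care in a full write-up to be the justification of the integration by parts on $\p D$ in the low-regularity Lipschitz setting, and --- should $\RR^2\setminus\overline D$ happen to be disconnected --- the restriction to its unbounded component, which is the only region relevant to the scattered fields used later.
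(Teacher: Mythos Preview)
Your argument is correct and is precisely the classical energy/Rellich proof one finds in the references the paper cites. Note, however, that the paper does \emph{not} supply its own proof of this lemma: it simply states the result and refers the reader to \cite{book-CK,book-AFKRYZ}. So there is no ``paper's proof'' to compare against beyond those sources, and your write-up is essentially the standard proof from Colton--Kress.
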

The following lemma is important for us.
\begin{lem} \label{lemma-invertible} The following properties hold:
 \begin{enumerate}
     \item  Suppose that $D$ is of class $\mathcal{C}^{2}$. If $k^2$
     is not an eigenvalue of $-\Delta$ on $D$ with Dirichlet boundary condition, then the operator $(1/2)I+\Kcal_{D}^k: L^2(\p D) \rightarrow L^2(\p D)$ is invertible.
     \item   Let $D$ be a bounded Lipschitz domain. If $k^2$
     is not an eigenvalue of $-\Delta$ on $D$ with Neumann boundary condition, then the operator $-(1/2)I+\big(\Kcal_{D}^k\big)^*: L^2(\p D) \rightarrow L^2(\p D)$ is invertible.
   \end{enumerate}
\end{lem}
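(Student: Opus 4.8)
The plan is to prove both statements by the same two–step scheme. First one shows that the operator in question is Fredholm of index zero on $L^2(\p D)$, so that invertibility is equivalent to injectivity; then one excludes a nontrivial kernel by a standard layer–potential argument based on the jump relations \eqref{jump-normal-single-layer}--\eqref{jump-normal-double-layer-potential} together with the exterior uniqueness result, Lemma \ref{uniqness-lemma}.

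\emph{Fredholmness.} In part (1), $D$ is of class $\mathcal{C}^{2}$, so the kernel of $\Kcal_D^k$ is weakly singular and $\Kcal_D^k$ is compact on $L^2(\p D)$ (in fact it maps $L^2(\p D)$ boundedly into $H^1(\p D)$); hence $(1/2)I+\Kcal_D^k$ is Fredholm of index zero by the Riesz theory. In part (2), $D$ is only Lipschitz and $\Kcal_D^k$ is merely bounded on $L^2(\p D)$ (see \cite{CMM}); however, by \eqref{asymptotic-Hankel} the function $\Gamma_k-\Gamma_0$ extends $\mathcal{C}^1$ across the origin, so the kernel of $(\Kcal_D^k)^{\ast}-(\Kcal_D^0)^{\ast}$ is bounded on $\p D\times\p D$, which makes this difference Hilbert--Schmidt, hence compact. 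Since $-(1/2)I+(\Kcal_D^0)^{\ast}$ is Fredholm of index zero on $L^2(\p D)$ by the classical $L^2$-theory of harmonic layer potentials on bounded Lipschitz domains (Rellich–identity estimates for the Neumann problem), so is $-(1/2)I+(\Kcal_D^k)^{\ast}$. In both cases it then suffices to prove injectivity.

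\emph{Injectivity.} For (1), let $\phi\in L^2(\p D)$ with $((1/2)I+\Kcal_D^k)[\phi]=0$; the identity $\phi=-2\Kcal_D^k[\phi]$ together with the smoothing of $\Kcal_D^k$ gives $\phi\in H^1(\p D)$, so \eqref{jump-normal-double-layer-potential} is available. Put $u:=\Dcal_D^k[\phi]$. By \eqref{jump-double-layer}, $u|_{-}=((1/2)I+\Kcal_D^k)[\phi]=0$ on $\p D$; as $k^2$ is not a Dirichlet eigenvalue of $-\Delta$ on $D$, $u\equiv0$ in $D$, whence $\p u/\p\nu|_{-}=0$ and then $\p u/\p\nu|_{+}=0$ by \eqref{jump-normal-double-layer-potential}. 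Thus $u$ is a radiating solution of $\Delta u+k^2u=0$ in $\RR^2\setminus\overline D$ with vanishing Neumann data, so $u\equiv0$ there by Lemma \ref{uniqness-lemma}, and the jump of the double layer potential yields $\phi=u|_{-}-u|_{+}=0$. For (2), let $\phi\in L^2(\p D)$ with $(-(1/2)I+(\Kcal_D^k)^{\ast})[\phi]=0$ and set $u:=\Scal_D^k[\phi]\in H^1_{loc}(\RR^2)$, which is continuous across $\p D$. By \eqref{jump-normal-single-layer}, $\p u/\p\nu|_{-}=0$ on $\p D$; as $k^2$ is not a Neumann eigenvalue of $-\Delta$ on $D$, $u\equiv0$ in $D$, so $u|_{-}=0$ and hence $u|_{+}=0$ by continuity of $\Scal_D^k[\phi]$. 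Then $u$ is a radiating solution of the exterior Helmholtz equation with zero Dirichlet data, so $u\equiv0$ in $\RR^2\setminus\overline D$ by Lemma \ref{uniqness-lemma}, and \eqref{jump-normal-single-layer} gives $\phi=\p u/\p\nu|_{+}-\p u/\p\nu|_{-}=0$. In each case injectivity combined with the Fredholm property gives invertibility.

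The genuinely delicate point is the Fredholm property in the Lipschitz setting of part (2): it rests on the hard $L^2$-theory of layer potentials on Lipschitz domains, whereas part (1) uses only classical compactness of a weakly singular operator. The injectivity arguments are routine once the jump relations and Lemma \ref{uniqness-lemma} are in hand; the only subtlety there is the regularity bootstrap needed in part (1) in order to invoke the continuity relation \eqref{jump-normal-double-layer-potential}.
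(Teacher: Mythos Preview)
Your injectivity arguments for both parts coincide with the paper's: define the relevant layer potential, use the interior eigenvalue hypothesis to kill it inside $D$, transfer the zero boundary data across $\p D$ via the appropriate continuity relation, and then apply Lemma \ref{uniqness-lemma} to kill it outside; the jump relations recover $\phi=0$. Your added regularity bootstrap in part (1), $\phi=-2\Kcal_D^k[\phi]\in H^1(\p D)$, is a clean way to justify invoking \eqref{jump-normal-double-layer-potential}, which the paper uses without comment.

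The genuine difference is in the Fredholm step for part (2). The paper simply asserts that $\Kcal_D^k$ and $(\Kcal_D^k)^*$ are compact and invokes Riesz--Fredholm directly; you instead observe that on a merely Lipschitz boundary $(\Kcal_D^k)^*$ need not be compact, and argue via the compact perturbation $(\Kcal_D^k)^*-(\Kcal_D^0)^*$ together with the deep Rellich/Verchota $L^2$-theory showing $-(1/2)I+(\Kcal_D^0)^*$ is Fredholm of index zero. Your route is technically the correct one for general Lipschitz $D$: compactness of the double-layer operator fails without additional boundary regularity (e.g.\ $\mathcal{C}^{1}$), so the paper's one-line justification is, strictly speaking, inadequate in the Lipschitz setting it claims. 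The cost of your approach is the reliance on substantially harder machinery; the benefit is that the argument actually matches the stated hypothesis.
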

\begin{proof} The operators $\Kcal_{D}^k$ and  $\big(\Kcal_{D}^k\big)^*$ are compact.
Therefore, we can apply the Reisz-Fredholm theory.
Let $\phi \in L^2(\p D)$ such that $\big((1/2)I+\Kcal_{D}^k\big)[\phi]=0$.
Then  $v(x):=\Dcal_{D}^k[\phi]$ on $D$ is a solution to $\Delta v +k^2 v=0$ with the boundary
condition $v|_{-}=0$ on $\p D$. If $k^2$
     is not an eigenvalue of $-\Delta$ on $D$ with Dirichlet boundary condition, then $\Dcal_{D}^k[\phi]=0$
      in $D$. Since $\p (\Dcal_{D}^k[\phi])/\p \nu$ exists and has no jump across $\p D$, we get
$$
\pd{\Dcal_{D}^{k}[\phi]}{\nu}\bigg|_{+}=\pd{\Dcal_{D}^{k}[\phi]}{\nu}\bigg|_{-}=0 \q \mbox{on }\p D.
$$
One easily checks   that $v$  is a solution to $\Delta v +k^2 v=0$ on $\RR^2\backslash \overline{D}$
with the boundary
condition ${\p v}/{\p \nu}\big|_{+}=0$ on $\p D$ and satisfies the radiation condition. The uniqueness
result in Lemma \ref{uniqness-lemma} implies that  $\Dcal_{D}^k[\phi]=0$
in $\RR^2\backslash \overline{D}$. Therefore, we conclude
$$
 \phi=\Dcal_{D}^k[\phi]\big |_{-}- \Dcal_{D}^k[\phi]\big |_{+}=0.
$$
Suppose now $\big(-(1/2)I+\big(\Kcal_{D}^k\big)^*\big)[\psi]=0$. Define $w:=\Scal_{D}^k[\psi]$ on $\RR^2 \backslash \p D$. Therefore, $w$ is the solution to  $\Delta w+k^2 w=0$ in $D$ with the boundary
condition $\p{w}/{\p \nu}|_{-}=0$ on $\p D$. If $k^2$
     is not an eigenvalue of $-\Delta$ on $D$ with Neumann boundary condition, then $\Scal_{D}^k[\psi]=0$
      in $D$. Furthermore, $w$ is continuous in $\RR^2$, thus $w$ is a solution
  to $\Delta w+k^2 w=0$ on $\RR^2\backslash \overline{D}$
with the boundary
condition $w|_{+}=0$ on $\p D$ and satisfies the radiation condition. The uniqueness
result of Lemma \ref{uniqness-lemma} yields that  $\Scal_{D}^k[\psi]=0$
in $\RR^2\backslash \overline{D}$ and hence
$$
 \psi=\pd{\Scal_{D}^k[\psi]}{\nu}\Big |_{+}- \pd{\Scal_{D}^k[\psi]}{\nu}\Big |_{-}=0.
$$
\end{proof}

\subsection{Asymptotic of layer potentials}
Let $\tilde x, \tilde y \in \p D_\ep$, that is,
\begin{align*}
\ds \tilde x =x+\ep h(x)\nu(x),\q \q \q \tilde y =y+\ep h(y)\nu(y),
\end{align*}
for $x=X(t), y=X(s)\in \p D$. By, $\nu(\tilde y)$ and  $d\sigma(\tilde y)$ we denote the unit outward unit normal  and the length element to $\p D_\ep$ at $\tilde y$, respectively.   It was proved in \cite{AKLZ1} that
\begin{align}\label{n-tilde}
\ds \nu(\tilde y)=\frac{\nu(y)-\ep\Big(h(y)\tau(y)\nu(y)+h^{'}(s)T(y)\Big)}{\sqrt{\Big(1-\ep h(y) \tau(y)\Big)^2+\ep^2 \big(h^{'}(s)\big)^2}},
\end{align}
and
\begin{align}\label{sigma-tilde}
\ds d\sigma(\tilde y)= \sqrt{\Big(1-\ep h(y) \tau(y)\Big)^2+\ep^2 \big(h^{'}(s)\big)^2} d\sigma(y).
\end{align}
Since,
\begin{align}\label{tilde-x-y}
\ds \tilde y -\tilde x =y-x+\ep \Big(h(y)\nu(y)-h(x)\nu(x)\Big),
\end{align}
which yields
\begin{align}\label{x-y-square}
\ds |\tilde y -\tilde x |^2=|y-x|^2 \bigg(1+2\ep \frac{\la y-x, h(y)\nu(y)-h(x)\nu(x) \ra}{|y-x|^2}+\ep^2 \frac{\big|h(y)\nu(y)-h(x)\nu(x)\big|^2}{|y-x|^2}\bigg),
\end{align}
and hence
\begin{align}\label{1-over-x-y}
\ds \frac{1}{|\tilde y -\tilde x |^2}=\frac{1}{|y-x|^2}\cdot \frac{1}{{1+2\ep F(x,y)+\ep^2 G(x,y)}},
\end{align}
where
\begin{align*}
\ds F(x,y)=\frac{\la y-x, h(y)\nu(y)-h(x)\nu(x) \ra}{|y-x|^2}, \q\q G(x,y)=\frac{\big|h(y)\nu(y)-h(x)\nu(x)\big|^2}{|y-x|^2}.
\end{align*}
One can easily see that
\begin{align*}
\ds |F(x,y)|+|G(x,y)|^{\frac{1}{2}}\leq C \|X\|_{\mathcal{C}^2(\p D)} \|h\|_{\mathcal{C}^1(\p D)} \q \mbox{for all }
x,y \in \p D.
\end{align*}

In order to prove the asymptotic expansion of the operator $\Kcal_{D_\ep}^k$, we  investigate
\begin{align*}
\Big(k {H_0^{1}}^{'}(k|\tilde y -\tilde x|)|\tilde y -\tilde x|\Big)\frac{\la \tilde y-\tilde x, \nu(\tilde y)\ra}{|\tilde y-\tilde x|^2}d\sigma (\tilde y).
\end{align*}
By using \eqref{x-y-square}, we write
\begin{align}\label{H}
k {H_0^{1}}^{'}(k|\tilde y -\tilde x|)|\tilde y -\tilde x|=\sum_{n=0}^{\infty}\ep^n \mathbb{H}_n(x,y),
\end{align}
where the series converges absolutely and uniformly. In particular,
\begin{align*}
\mathbb{H}_0(x,y)=k {H_0^{1}}^{'}(k| y - x|)| y - x|,
\end{align*}
and
\begin{align*}
\mathbb{H}_1(x,y)=\Big[k^2 {H_0^{1}}^{''}(k| y - x|)| y - x|+k {H_0^{1}}^{'}(k| y - x|)\Big]\frac{\la y-x, h(y)\nu(y)-h(x)\nu(x)\ra}{|y-x|}.
\end{align*}
It follows from \eqref{n-tilde}, \eqref{sigma-tilde}, \eqref{tilde-x-y}, and  \eqref{1-over-x-y} that
{
\begin{align}\label{M}
\ds \frac{\la \tilde y-\tilde x, \nu(\tilde y)\ra}{|\tilde y-\tilde x|^2}d\sigma (\tilde y)&=
\frac{\la y-x+\ep\big(h(y)\nu(y)-h(x)\nu(x)\big), \nu(y)- \ep \big[h(y) \tau(y) \nu(y)+h^{'}(s)T(y)\big]\ra}{|y-x|^2}\nonumber\\
\nm \ds & \q \times \frac{1}{{1+2\ep F(x,y)+\ep^2 G(x,y)}} d\sigma(y)\nonumber\\
\nm \ds &:=\sum_{n=0}^{\infty}\ep^n \mathbb{M}_n(x,y)d\sigma(y),
\end{align}}
where the series converges absolutely  and uniformly. In particular, one can easily see that
$$
\mathbb{M}_0(x,y)=\frac{\la y-x, \nu(y)\ra}{|y-x|^2},
$$
and
\begin{align*}
 \ds \mathbb{M}_1(x,y)&= h(x)\bigg(-\frac{\la \nu(x), \nu(y)\ra}{|y-x|^2}+2 \frac{\la y-x, \nu(y)\ra \la y-x, \nu(x)\ra}{|y-x|^4}\bigg)\\
\nm \ds & \q + \bigg(\frac{h(y)}{|y-x|^2}-2 h(y)\frac{\big(\la y-x, \nu(y)\ra\big)^2}{|y-x|^4}- \frac{\la y-x, h(y) \tau(y)\nu(y)+h^{'}(s) T(y)\ra}{|y-x|^2} \bigg).
\end{align*}
Thus  we obtain  from \eqref{H}  and \eqref{M} that
$$
\Big(k H^{'}_0(k|\tilde y -\tilde x|)|\tilde y -\tilde x|\Big)\frac{\la \tilde y-\tilde x, \nu(\tilde y)\ra}{|\tilde y-\tilde x|^2}d\sigma (\tilde y)=\sum_{n=0}^{\infty}\ep^n \underbrace{\sum_{m=0}^{n} \mathbb{M}_m(x,y)
 \mathbb{H}_{n-m}(x,y)}_{:=\mathbb{K}_{n}(x,y)}d\sigma(y),
$$
with
\begin{align*}
-\frac{i}{4}\mathbb{K}_{0}(x,y)= -\frac{ik}{4} {H_0^1}^{'}(k| y - x|)   \frac{\la y-x, \nu(y)\ra}{|y-x|}=  \pd{\Gamma_{k}(x-y)}{\nu(y)},
\end{align*}
and
\begin{align*}
\ds -\frac{i}{4}\mathbb{K}_{1}(x,y)&= h(x)\bigg[\frac{ik^2}{4}{H_0^1}^{''}(k| y - x|)\frac{\la y-x, \nu(x)\ra \la y-x, \nu(y)\ra }{|y-x|^2}\\
\nm \ds &\q\q\q\q\q+\frac{ik}{4} {H_0^1}^{'}(k| y - x|)\bigg( \frac{\la \nu(x), \nu(y)\ra}{|y-x|}-\frac{\la y-x, \nu(x)\ra \la y-x, \nu(y)\ra }{|y-x|^3}\bigg) \bigg]\\
\nm \ds &\q + h(y)\bigg[-\frac{ik^2}{4}{H_0^1}^{''}(k| y - x|)\frac{\big(\la y-x, \nu(y)\ra\big)^2 }{|y-x|^2}\\
\nm \ds &\q\q\q\q\q\q\q\q\q\q\q-\frac{ik}{4} {H_0^1}^{'}(k| y - x|)\bigg( \frac{1}{|y-x|}-\frac{\big(\la y-x, \nu(y)\ra\big)^2 }{|y-x|^3}\bigg) \bigg]\\
\nm \ds &\q + \frac{ik}{4}{H_0^1}^{'}(k| y - x|)\frac{\la y-x, h(y)\tau(y)\nu(y)+h^{'}(s)T(y)\ra }{|y-x|^2}.
\end{align*}
Note that
\begin{align*}
\ds -\frac{i}{4}\mathbb{K}_{1}(x,y)&= h(x)\frac{\p^2 \Gamma_k(x-y)}{\p \nu(x)\p \nu(y)}+h(y)\frac{\p^2 \Gamma_k(x-y)}{\p \nu(y)^2}\\
\nm \ds &\q
-\tau (y)h(y)  \pd{\Gamma_{k}(x-y)}{\nu(y)}-h^{'}(s)\pd{\Gamma_{k}(x-y)}{T(y)}\\
\nm \ds &=h(x)\frac{\p^2 \Gamma_k(x-y)}{\p \nu(x)\p \nu(y)}-\frac{d}{ds}\Big(h(y)\frac{d \Gamma_k(x-y)}{ds} \Big)-k^2 h(y) \Gamma_k(x-y).
\end{align*}
In order to justify the last equality, we use $(\Delta +k^2)\Gamma_k(x-y)=0$ for $x\neq y$ and the representation of $\Delta +k^2$ on $\p D$  given in \eqref{Laplacian-local}.

Introduce a sequence of integral operators $\big(\Dcal_{D,n}^k\big)_{n\in \NN}$,  defined for any $\phi \in L^2(\p D)$ by
$$
\Dcal_{D,n}^k [\phi](x)=-\frac{i}{4} \int_{\p D}\mathbb{K}_{n}(x,y)\phi (y) d\sigma(y)\q \mbox{for } n\geq 0,
$$
where $\Dcal_{D,0}^k=\Kcal_{D}^k$ and  for $\phi \in \mathcal{C}^2(\p D)$ we have
\begin{align}\label{D-1}
\ds \Dcal_{D,1}^k [\phi](x)=-k^2\Scal_{D}^{k}[ h \phi](x)+ h(x) \pd{\Dcal_{D}^{k}[\phi]}{\nu}(x)-\Scal^k_{D}\bigg(\frac{d}{ds}\Big(h \frac{d \phi}{ds}\Big)\bigg)(x), \q x \in \p D.
\end{align}
It is easy to prove that the operator $\Dcal_{D,n}^k$ for $n\geq 1$ with the kernel $\mathbb{K}_n(x,y)$ is bounded in $L^2(\p D)$. In fact,  it is an immediate consequence of the celebrate theorem of Coifman-MacIntosh-Meyer, see \cite{CMM}.

in order to establish the asymptotic expansion of the operator $\p (\Dcal_{D_\ep})/\p \nu$  on  $\p D_\ep$, we next  investigate the following terms
$$
\Big[k {H_0^1}^{'}(k |\tilde x-\tilde y|)|\tilde x-\tilde y|\Big]\frac{\la \nu(\tilde x),\nu(\tilde y)\ra}{|\tilde x-\tilde y|^2}d\sigma(\tilde y),
$$
and
\begin{align*}
\Big[-k^2 {H_0^1}^{''}(k |\tilde x-\tilde y|)|\tilde x-\tilde y|^2+k {H_0^1}^{'}(k |\tilde x-\tilde y|)|\tilde x-\tilde y|\Big]\frac{\la \tilde y -\tilde x , \nu(\tilde x)\ra }{|\tilde x-\tilde y|^2}\frac{\la \tilde y -\tilde x , \nu(\tilde y)\ra }{|\tilde x-\tilde y|^2}
d\sigma(\tilde y).
\end{align*}
It follows from \eqref{n-tilde}, \eqref{sigma-tilde}, and \eqref{1-over-x-y} that
\begin{align}\label{L}
\ds& \frac{\la \nu(\tilde x),\nu(\tilde y)\ra}{|\tilde x-\tilde y|^2}d\sigma(\tilde y)\nonumber\\
\nm \ds&\q=
\frac{\Big[\nu(x)-\ep \Big(h(x)\tau(x)\nu(x)+h^{'}(t) T(x)\Big)\Big]\Big[\nu(y)-\ep \Big(h(y)\tau(y)\nu(y)+h^{'}(s) T(y)\Big)\Big]}{| x- y|^2}\nonumber\\
\nm \ds &\q\q \times \frac{1}{{1+2\ep F(x,y)+\ep^2 G(x,y)}}\frac{1}{\sqrt{\big(1-\ep h(x) \tau(x)\big)^2+\ep^2 \big(h^{'}(t)\big)^2}}d\sigma(y)\nonumber\\
\nm \ds&\q :=\sum_{n=0}^{\infty}\ep^n\mathbb{L}_n(x,y)d\sigma(y),
\end{align}
with
\begin{align*}
\ds \mathbb{L}_0(x,y)&=\frac{\la \nu( x),\nu(y)\ra}{| x- y|^2},
\end{align*}
and
\begin{align*}
\ds \mathbb{L}_1(x,y)&=\tau(x) h(x) \frac{\la \nu( x),\nu(y)\ra}{| x- y|^2}\\
\nm \ds &\q +2h(x)\frac{\la y-x,\nu(x) \ra \la \nu(x),\nu(y) \ra }{| x- y|^4}-\frac{\la h(x)\tau (x)\nu(x)+h^{'}(t)T(x),\nu(y) \ra }{| x- y|^2}\\
\nm \ds &\q - 2h(y)\frac{\la y-x,\nu(y) \ra \la \nu(x),\nu(y) \ra }{| x- y|^4}-\frac{\la h(y)\tau (y)\nu(y)+h^{'}(s)T(y),\nu(x) \ra }{| x- y|^2}.
\end{align*}
We get  from \eqref{H} and \eqref{L} that
\begin{align}\label{first-term-nuxnuy-Gamma}
\ds \Big[k {H_0^1}^{'}(k |\tilde x-\tilde y|)|\tilde x-\tilde y|\Big]\frac{\la \nu(\tilde x),\nu(\tilde y)\ra}{|\tilde x-\tilde y|^2}d\sigma(\tilde y)&=\Big[k {H_0^1}^{'}(k | x-y|)| x-y|\Big]\frac{\la \nu( x),\nu(y)\ra}{| x- y|^2}d\sigma( y)\nonumber\\
\nm \ds &\q +\sum_{n=1}^{\infty}\ep^n \sum_{m=0}^{n}\mathbb{H}_m(x,y) \mathbb{L}_{n-m}(x,y)d\sigma(y).
\end{align}

Using \eqref{n-tilde}, \eqref{tilde-x-y}, and \eqref{1-over-x-y}, we obtain
\begin{align*}
\ds \frac{\la \tilde y -\tilde x , \nu(\tilde x)\ra }{|\tilde x-\tilde y|^2}=&
\frac{\la y-x+\ep \big(h(y)\nu(y)-h(x)\nu(x)\big), \nu(x)-\ep \big(h(x)\tau (x)\nu(x)+h^{'}(t)T(x)\big)\ra}{| x- y|^2}\\
\nm \ds\q & \times  \frac{1}{{1+2\ep F(x,y)+\ep^2 G(x,y)}}\frac{1}{\sqrt{\big(1-\ep h(x) \tau(x)\big)^2+\ep^2 \big(h^{'}(t)\big)^2}}\\
\nm \ds:=& \sum_{n=0}^{\infty}\ep^n \mathbb{N}_{n}(x,y),
\end{align*}
where
\begin{align*}
\ds \mathbb{N}_{0}(x,y)= \frac{\la  y - x , \nu( x)\ra }{| x- y|^2},
\end{align*}
and
\begin{align}\label{N}
\ds \mathbb{N}_{1}(x,y)&=\tau(x)h(x)\frac{\la  y - x , \nu( x)\ra }{| x- y|^2}-\frac{\la y-x, h(x)\tau(x)\nu(x)+h^{'}(t) T(x) \ra}{| x- y|^2}\nonumber\\
\nm\ds&\q +h(y)\bigg( \frac{\la \nu(x),\nu(y)\ra}{| x- y|^2}-2 \frac{\la y-x, \nu(y)\ra\la y-x, \nu(x)\ra  }{| x- y|^4}\bigg)\nonumber\\
\nm \ds &\q+ h(x)\bigg( -\frac{1}{| x- y|^2}+2 \frac{\big (\la y-x, \nu(x)\ra\big )^2  }{| x- y|^4}\bigg).
\end{align}
By the  Taylor expansion and \eqref{x-y-square}, we get
\begin{align}\label{S}
k^2 {H_0^1}^{''}(k |\tilde y-\tilde x|)|\tilde y-\tilde x|^2:=\sum_{n=0}^{\infty}\mathbb{S}_{n}(x,y)=k^2 {H_0^1}^{''}(k |y- x|)| y- x|^2
+\sum_{n=1}^{\infty}\mathbb{S}_{n}(x,y),
\end{align}
with
$$
\mathbb{S}_{1}(x,y)=\Big[k^3 {H_0^1}^{'''}(k |y-x|)| y- x|+2 k^2 {H_0^1}^{''}(k |y- x|)\Big]
\la y-x, h(y)\nu(y)-h(x)\nu(x)\ra.
$$
Combining  \eqref{H}, \eqref{L}, \eqref{N}, and \eqref{S} yields the expansion
\begin{align}\label{second-term-nuxnuy-Gamma}
\ds &\Big[-k^2 {H_0^1}^{''}(k |\tilde x-\tilde y|)|\tilde x-\tilde y|^2+k {H_0^1}^{'}(k |\tilde x-\tilde y|)|\tilde x-\tilde y|\Big]\frac{\la \tilde y -\tilde x , \nu(\tilde x)\ra }{|\tilde x-\tilde y|^2}\frac{\la \tilde y -\tilde x , \nu(\tilde y)\ra }{|\tilde x-\tilde y|^2}
d\sigma(\tilde y)\nonumber \\
\nm \ds &=\Big[-k^2 {H_0^1}^{''}(k | x- y|)| x- y|^2+k {H_0^1}^{'}(k | x- y|)| x- y|\Big]\frac{\la  y - x , \nu( x)\ra }{| x-y|^2}\frac{\la  y - x , \nu( y)\ra }{| x- y|^2}
d\sigma( y)\nonumber\\
\nm \ds&\q +\sum_{n=1}^{\infty}\ep^n \sum_{m+p+q=1}^{n}\Big(\mathbb{H}_{m}(x,y)-\mathbb{S}_{m}(x,y\Big)
\mathbb{N}_{p}(x,y) \mathbb{M}_{q}(x,y)d\sigma(y).
\end{align}
Thanks  to \eqref{first-term-nuxnuy-Gamma} and \eqref{second-term-nuxnuy-Gamma}, we write
\begin{align}\label{nuxnuy}
\frac{\p^2 \Gamma_k(\tilde x-\tilde y)}{\p \nu(\tilde x)\p \nu(\tilde y)}d\sigma(\tilde y):=-\frac{i}{4}\sum_{n=0}^{\infty}\ep^n \mathbb{B}_{n}(x,y) d\sigma(y),
\end{align}
where
\begin{align*}
\ds -\frac{i}{4}\mathbb{B}_{0}(x,y)=\frac{\p^2 \Gamma_k( x- y)}{\p \nu( x)\p \nu( y)},
\end{align*}
and
\begin{align}\label{B1}
-\frac{i}{4}\mathbb{B}_{1}(x,y)&=h(x)\frac{\p^3 \Gamma_k( x- y)}{\p \nu^2( x)\p \nu( y)}-h^{'}(t)\frac{\p^2 \Gamma_k( x- y)}{\p T( x)\p \nu( y)}\nonumber \\
\nm \ds&\q + h(y)\frac{\p^3 \Gamma_k( x- y)}{\p \nu( x)\p \nu^2( y)}-\tau(y)h(y)\frac{\p^2 \Gamma_k( x- y)}{\p \nu( x)\p \nu( y)}-h^{'}(s)\frac{\p^2 \Gamma_k( x- y)}{\p \nu(x)\p T( y)}\nonumber\\
\nm \ds &=h(x)\frac{\p^3 \Gamma_k( x- y)}{\p \nu^2( x)\p \nu( y)}-h^{'}(t)\frac{\p^2 \Gamma_k( x- y)}{\p T( x)\p \nu( y)}\nonumber\\
\nm \ds &\q -\pd{}{\nu(x)}\frac{d}{ds}\Big(h(y)\frac{d \Gamma_k(x-y)}{ds} \Big)-k^2 h(y) \pd{\Gamma_k(x-y)}{\nu(x)}.
\end{align}
Introduce a sequence of integral operators $\big(\Acal_{D,n}^k\big)_{n\in \NN}$ defined for any $\phi \in L^2(\p D)$ by
$$
\Acal_{D,n}^k [\phi](x)=-\frac{i}{4} \int_{\p D}\mathbb{B}_{n}(x,y)\phi (y) d\sigma(y)\q \mbox{for } n\geq 0,
$$
with  $\Acal_{D,0}^k=\p(\Dcal_{D}^k)/\p \nu$. If $\phi \in \mathcal{C}^2(\p D)$, we get  from \eqref{jum-double-nu-squared} and  \eqref{B1} that
\begin{align}\label{A-1}
\ds  \Acal_{D,1}^{k}[\phi](x)=&\tau(x) h(x)\pd{(\Dcal_{D}^{k}[\phi])}{\nu}(x)-k^2 (\Kcal_{D}^{k})^*[ h\phi])(x)-k^2 h(x) \Kcal_{D}^k[\phi](x)\nonumber\\
\nm \ds &- (\Kcal_{D}^{k})^*\Big(\frac{d}{ds}\big(h \frac{d \phi}{ds}\big)\Big)(x)- \frac{d }{dt}\Big(h \frac{d (\Kcal_{D}^{k}[ \phi] ) }{dt}\Big)(x)\nonumber\\
\nm \ds =&\tau(x) h(x)\pd{(\Dcal_{D}^{k}[\phi])}{\nu}(x)-k^2\pd{(\Scal_{D}^{k}[ h\phi])}{\nu}\Big|_{\pm}(x)-k^2 h(x) \Dcal_{D}^k[\phi]\big|_{\pm}(x)\nonumber\\
\nm \ds &- \pd {\Scal^k_{D}}{\nu}\bigg(\frac{d}{ds}\Big(h \frac{d \phi}{ds}\Big)\bigg)\Big|_{\pm}(x)- \frac{d }{dt}\Big(h \frac{d (\Dcal_{D}^{k}[ \phi] ) }{dt}\Big)\Big|_{\pm}(x),\q x \in \p D.
\end{align}
 The operator $\Acal_{D,n}^k$ is bounded in $L^2(\p D)$ for $n\geq 1$. In fact,  it is an immediate consequence of the celebrate theorem of Coifman-MacIntosh-Meyer \cite{CMM}.

The results of the above asymptotic analysis is summarized in the following theorem.

\begin{thm} Let $N \in \NN$.
There exists  $C$ depending only on $k$, $\|X\|_{\mathcal{C}^2}$, and $\|h\|_{\mathcal{C}^1}$, such that for any ${\phi}_\ep \in L^2(\p D_\ep)$,  we have
\begin{align}\label{asymptotic-double-layer}
\bigg\|\Dcal_{D_\ep}^{k}[{\phi}_\ep]\circ \Psi_\ep\Big|_{\pm}-\Dcal_{D}^{k}[\phi]\Big|_{\pm}-\sum_{n=1}^{N}\ep^n \Dcal_{D,n}^{k}[\phi]\bigg\|_{L^2(\p D)}\leq C \ep^{N+1}\big\|\phi \big\|_{L^2(\p D)},
\end{align}
and
\begin{align}\label{asymptotic-normal-double-layer}
\bigg\|\pd{\Dcal_{D_\ep}^{k}[{\phi}_\ep]}{\nu}\circ \Psi_\ep-\pd{\Dcal_{D}^{k}[\phi]}{\nu}-\sum_{n=1}^{N}\ep^n \Acal_{D,n}^{k}[\phi]\Big|_{\pm}\bigg\|_{L^2(\p D)}\leq C \ep^{N+1}\big\|\phi \big\|_{L^2(\p D)},
\end{align}
where $\phi:=\phi_\ep\circ \Psi_\ep$.
\end{thm}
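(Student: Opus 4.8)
The plan is to freeze the geometry: pull everything back to the fixed curve $\p D$ through $\Psi_\ep$, sum the kernel expansions of Section~3.2, and control the tail uniformly. The one genuinely new ingredient beyond Section~3.2 is a \emph{geometric} (at most exponential in $n$) bound on the operator norms $\|\Dcal_{D,n}^k\|$ and $\|\Acal_{D,n}^k\|$.

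First I would set $\phi:=\phi_\ep\circ\Psi_\ep$; by \eqref{sigma-tilde} the norms $\|\phi\|_{L^2(\p D)}$ and $\|\phi_\ep\|_{L^2(\p D_\ep)}$ are comparable uniformly in $\ep$ small, so it suffices to bound the left-hand sides of \eqref{asymptotic-double-layer}--\eqref{asymptotic-normal-double-layer} by $\|\phi\|_{L^2(\p D)}$. Writing $\tilde x=\Psi_\ep(x)$, $\tilde y=\Psi_\ep(y)$ and using $\partial\Gamma_k(\tilde x-\tilde y)/\partial\nu(\tilde y)=-\tfrac i4\,k{H_0^1}'(k|\tilde x-\tilde y|)\,|\tilde x-\tilde y|\,\la\tilde y-\tilde x,\nu(\tilde y)\ra/|\tilde x-\tilde y|^2$, the substitution $d\sigma(\tilde y)=\sqrt{(1-\ep h\tau)^2+\ep^2(h')^2}\,d\sigma(y)$ turns $\Dcal_{D_\ep}^k[\phi_\ep]\circ\Psi_\ep(x)$ into an integral over $\p D$ against $\phi(y)$ whose kernel is precisely the quantity expanded in \eqref{H}--\eqref{M}, and it turns $\partial(\Dcal_{D_\ep}^k[\phi_\ep])/\partial\nu\circ\Psi_\ep(x)$ into an integral over $\p D$ with kernel $\partial^2\Gamma_k(\tilde x-\tilde y)/\partial\nu(\tilde x)\partial\nu(\tilde y)\,d\sigma(\tilde y)$ expanded in \eqref{nuxnuy} (here $\partial/\partial\nu(\tilde x)=\nu(\tilde x)\cdot\nabla_{\tilde x}$ with $\nu(\tilde x)$ itself expanded via \eqref{n-tilde}, as already built into \eqref{first-term-nuxnuy-Gamma}--\eqref{second-term-nuxnuy-Gamma}). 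Because the series in \eqref{H}, \eqref{M}, \eqref{L}, \eqref{N}, \eqref{S} converge absolutely and uniformly once $\ep\le\ep_0$, with $\ep_0$ depending only on $k,\|X\|_{\mathcal{C}^2},\|h\|_{\mathcal{C}^1}$, I can split off the first $N$ terms and write $\Dcal_{D_\ep}^k[\phi_\ep]\circ\Psi_\ep=\Kcal_D^k[\phi]+\sum_{n=1}^N\ep^n\Dcal_{D,n}^k[\phi]+\ep^{N+1}\Rcal_N^\ep[\phi]$, where $\Rcal_N^\ep$ is the integral operator with kernel $-\tfrac i4\sum_{n\ge N+1}\ep^{\,n-N-1}\mathbb K_n(x,y)$, and analogously for the normal derivative with $\mathbb B_n$ and $\Acal_{D,n}^k$. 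The theorem thus reduces to the single uniform estimate $\|\Rcal_N^\ep\|_{L^2(\p D)\to L^2(\p D)}\le C$ for $\ep\le\ep_0$.

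The main obstacle is establishing the geometric bounds $\|\Dcal_{D,n}^k\|_{L^2\to L^2}\le C_0M^n$ and $\|\Acal_{D,n}^k|_{\pm}\|_{L^2\to L^2}\le C_0M^n$ with $M:=C(1+k)(1+\|X\|_{\mathcal{C}^2})(1+\|h\|_{\mathcal{C}^1})$ --- that is, re-deriving the boundedness already asserted in Section~3.2 for each fixed $n$, but now with an explicit, at most exponential, dependence on $n$. I would do this by tracking constants through the building blocks of $\mathbb K_n,\mathbb B_n$: (i) the Taylor coefficients of $(1+2\ep F+\ep^2G)^{-1}$, bounded by $(C\|X\|_{\mathcal{C}^2}\|h\|_{\mathcal{C}^1})^n$ from the estimate on $|F|+|G|^{1/2}$; (ii) the Taylor coefficients of $r\mapsto k{H_0^1}'(kr)\,r$ and of its derivatives, composed with $|\tilde x-\tilde y|^2=|x-y|^2(1+2\ep F+\ep^2G)$, which by the continuity of $-\tfrac{ik^p}4{H_0^1}^{(p)}(kr)\,r^p$ at $0$ and the log-expansion \eqref{asymptotic-Hankel} split into a Calder\'on--Zygmund (Cauchy-type) piece and a $\log$-type/bounded piece with coefficients $\le(Ck)^n$; and (iii) the factors $\nu(x)-\ep(h\tau\nu+h'T)$ and the $\pm1$ powers of $\sqrt{(1-\ep h\tau)^2+\ep^2(h')^2}$, with coefficients $\le(C\|h\|_{\mathcal{C}^1})^n$. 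Each $\mathbb K_n$ (resp.\ $\mathbb B_n$) is a finite Cauchy product of these, and after the algebraic cancellations that remove the most singular contributions --- the $1/|x-y|^2$ (and, for $\mathbb B_n$, $1/|x-y|^3$) terms produced by naive differentiation, which collapse via $\la y-x,\nu\ra=O(|x-y|^2)$, $\la\nu(x),\nu(y)\ra=1+O(|x-y|^2)$ and the local form \eqref{Laplacian-local} of $\Delta+k^2$, exactly as seen when $-\tfrac i4\mathbb K_1$ and $-\tfrac i4\mathbb B_1$ are rewritten in \eqref{D-1} and \eqref{B1} --- only a Cauchy-type kernel (handled by Coifman--MacIntosh--Meyer with constants governed by $\|X\|_{\mathcal{C}^2}$) and a weakly singular/bounded kernel (handled by Schur's test) survive, the combinatorics of the Cauchy product contributing the factor $M^n$. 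Then for $\ep\le\ep_0:=1/(2M)$ one gets $\|\Rcal_N^\ep\|\le C_0\sum_{j\ge0}(\ep M)^jM^{N+1}\le 2C_0M^{N+1}$, so the constant $C$ in \eqref{asymptotic-double-layer}--\eqref{asymptotic-normal-double-layer} depends only on $k,\|X\|_{\mathcal{C}^2},\|h\|_{\mathcal{C}^1}$ and $N$.

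It remains to match the one-sided traces. For \eqref{asymptotic-double-layer}: the $n=0$ term is $\Dcal_{D,0}^k=\Kcal_D^k$, the principal-value operator; by \eqref{jump-double-layer}, $\Dcal_{D_\ep}^k[\phi_\ep]|_{\pm}\circ\Psi_\ep=\mp\tfrac12\phi+(\text{p.v.\ integral})\circ\Psi_\ep$ and $\Dcal_D^k[\phi]|_{\pm}=\mp\tfrac12\phi+\Kcal_D^k[\phi]$, so the jump $\mp\tfrac12\phi$ cancels in the difference, which therefore equals the expansion of the p.v.\ integral and carries the same remainder estimate from both sides. For \eqref{asymptotic-normal-double-layer}: the operators $\Acal_{D,n}^k$ with $n\ge1$ involve single-layer normal-derivative traces and double-layer traces --- see \eqref{A-1} --- which is why they are written with $|_{\pm}$; choosing the sign according to the side from which $\tilde x\to\p D_\ep$ makes each order consistent, and the one-sided jumps in the partial sum cancel, matching $\partial(\Dcal_{D_\ep}^k[\phi_\ep])/\partial\nu$. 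The delicate point throughout is keeping the Coifman--MacIntosh--Meyer and Schur constants uniform in $n$ while verifying that the high-order singularity cancellations underpinning the boundedness of $\Dcal_{D,n}^k$ and $\Acal_{D,n}^k$ persist at every order, not only at $n=1$.
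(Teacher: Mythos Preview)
Your proposal is correct and follows the same route as the paper: pull back via $\Psi_\ep$, use the kernel expansions \eqref{H}--\eqref{nuxnuy} derived in Section~3.2, and invoke Coifman--MacIntosh--Meyer for the $L^2$-boundedness of each $\Dcal_{D,n}^k$, $\Acal_{D,n}^k$. The paper in fact offers no proof beyond declaring that the theorem ``summarizes'' those computations, so your explicit identification of the remainder operator $\Rcal_N^\ep$ and of the geometric-in-$n$ operator-norm bounds (together with the singularity cancellations needed at every order) supplies detail that the paper only asserts.
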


For  $\phi \in L^2(\p D)$. We introduce
\begin{align}\label{S-1}
\ds  \Scal_{D,1}^{k}[\phi](x) &=-\Scal_{D}^{k}[\tau h \phi](x)+h(\Kcal_{D}^{k})^*[\phi](x)+ \Kcal_{D}^{k}[h \phi](x)\nonumber\\
\nm \ds &=-\Scal_{D}^{k}[\tau h \phi](x)+\Big(h\pd{\Scal_{D}^{k}[ \phi]}{\nu}+\Dcal_{D}^{k}[ h \phi]\Big)(x) \Big |_{\pm},\q x\in \p D,
\end{align}
and
\begin{align}\label{K-1}
\ds  \Kcal_{D,1}^{k}[\phi](x)=&\tau(x) h(x)(\Kcal_{D}^{k})^*[ \phi](x)-\Kcal_{D}^{k}[\tau h \phi](x)\nonumber\\
\nm \ds &+ \pd{(\Dcal_{D}^{k}[ \phi])}{\nu}(x)- \frac{d }{ dt}\Big(h \frac{d(\Scal_{D}^{k}[ \phi] ) }{dt}\Big)(x)- k^2 h(x)\Scal_{D}^{k}[\phi](x),\nonumber\\
\nm \ds =&\Big(\tau h\pd{(\Scal_{D}^{k}[\phi])}{\nu}-\pd{(\Scal_{D}^{k}[\tau h\phi])}{\nu}\Big)\Big|_{\pm}(x)\nonumber\\
\nm \ds &+ \pd{(\Dcal_{D}^{k}[h \phi])}{\nu}(x)- \frac{d }{ dt}\Big(h \frac{d (\Scal_{D}^{k}[ \phi] ) }{dt}\Big)(x)- k^2 h(x)\Scal_{D}^{{k}}[\phi](x),\q x \in \p D.
\end{align}
It was proved in \cite{Z} that the operators  $ \Scal_{D,1}^{k}$  and  $ \Kcal_{D,1}^{k}$ are bounded in $L^2(\p D)$ and  the following proposition  holds.
\begin{prop}
There exists  $C$ depending only on $k$, $\|X\|_{\mathcal{C}^2}$, and $\|h\|_{\mathcal{C}^1}$, such that for any ${\phi}_\ep \in L^2(\p D_\ep)$,  we have
\begin{align}\label{asymptotic-single-layer}
\bigg\|\Scal_{D_\ep}^{k}[{\phi}_\ep]\circ \Psi_\ep-\Scal_{D}^{k}[\phi]-\ep \Scal_{D,1}^{k}[\phi]\bigg\|_{L^2(\p D)}\leq C \ep^2\big\|\phi \big\|_{L^2(\p D)},
\end{align}
and
\begin{align}\label{asymptotic-normal-single-layer}
\bigg\|\pd{\Scal_{D_\ep}^{k}[{\phi}_\ep]}{\nu}\circ \Psi_\ep\Big|_{\pm}-\pd{\Scal_{D}^{k}[\phi]}{\nu}\Big|_{\pm}-\ep \Kcal_{D,1}^{k}[\phi]\bigg\|_{L^2(\p D)}\leq C \ep^2\big\|\phi \big\|_{L^2(\p D)},
\end{align}
where $\phi:=\phi_\ep\circ \Psi_\ep$.
\end{prop}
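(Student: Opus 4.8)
The plan is to imitate, with the single layer kernel in place of the double layer one, the derivation leading to \eqref{asymptotic-double-layer}--\eqref{asymptotic-normal-double-layer}; the only genuinely new feature is the logarithmic singularity of $\Gamma_k$. Write $\tilde x=\Psi_\ep(x)$, $\tilde y=\Psi_\ep(y)$ and $\phi=\phi_\ep\circ\Psi_\ep$. Changing variables on $\p D_\ep$ by means of \eqref{sigma-tilde},
\begin{align*}
\Scal_{D_\ep}^{k}[\phi_\ep]\circ\Psi_\ep(x)=\int_{\p D}\Gamma_k(\tilde x-\tilde y)\,\phi(y)\sqrt{\big(1-\ep h(y)\tau(y)\big)^2+\ep^2\big(h'(s)\big)^2}\,d\sigma(y),
\end{align*}
while by the jump relations \eqref{jump-normal-single-layer}, $\pd{\Scal_{D_\ep}^{k}[\phi_\ep]}{\nu}\circ\Psi_\ep\big|_{\pm}(x)=\pm\tfrac12\phi(x)+(\Kcal_{D_\ep}^{k})^*[\phi_\ep]\circ\Psi_\ep(x)$, the last term being, up to the constant $-i/4$, the integral over $\p D$ of $\big[k{H_0^1}^{'}(k|\tilde x-\tilde y|)|\tilde x-\tilde y|\big]\,\la\tilde x-\tilde y,\nu(\tilde x)\ra\,|\tilde x-\tilde y|^{-2}\,\phi(y)\,d\sigma(\tilde y)$. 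Thus everything reduces to expanding these two kernels in powers of $\ep$ and reading off the terms of orders $0$ and $1$; the remaining part is the claimed $O(\ep^2)$ remainder.

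For the single layer I would use the splitting $-\tfrac i4 H_0^1(kr)=A(r^2)+B(r^2)\ln r$ with $A,B$ entire, which is \eqref{asymptotic-Hankel}; from \eqref{x-y-square} one gets $\ln|\tilde x-\tilde y|=\ln|x-y|+\ep F(x,y)+O(\ep^2)$ and absolutely convergent expansions of $A,B$ evaluated at $|\tilde x-\tilde y|^2=|x-y|^2(1+2\ep F+\ep^2 G)$, the geometric series being legitimate because $|F|+|G|^{1/2}\le C\|X\|_{\mathcal{C}^2}\|h\|_{\mathcal{C}^1}$. Combining with $\tilde x-\tilde y=(x-y)+\ep\big(h(x)\nu(x)-h(y)\nu(y)\big)$ and $\sqrt{(1-\ep h\tau)^2+\ep^2(h')^2}=1-\ep h(y)\tau(y)+O(\ep^2)$, the order-$\ep^0$ term is $\Gamma_k(x-y)$, hence $\Scal_D^k[\phi]$, and the order-$\ep$ term is $-h(y)\tau(y)\Gamma_k(x-y)+h(x)\pd{\Gamma_k(x-y)}{\nu(x)}+h(y)\pd{\Gamma_k(x-y)}{\nu(y)}$, which integrated against $\phi(y)\,d\sigma(y)$ reproduces exactly $-\Scal_D^k[\tau h\phi]+h\,(\Kcal_D^k)^*[\phi]+\Kcal_D^k[h\phi]=\Scal_{D,1}^k[\phi]$ of \eqref{S-1}. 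For the normal derivative I would feed \eqref{n-tilde} for $\nu(\tilde x)$ and \eqref{sigma-tilde} into the kernel and use the building blocks $\mathbb{H}_n$, $\mathbb{N}_n$ already computed in Section~3.2 (introduced in \eqref{H} and around \eqref{N}); the order-$\ep^0$ term is $(\Kcal_D^k)^*[\phi]$, and the order-$\ep$ term, after rewriting the second normal and tangential derivatives of $\Gamma_k$ by means of $(\Delta+k^2)\Gamma_k(x-y)=0$ and the local form \eqref{Laplacian-local}, exactly as in the passage from \eqref{B1} to \eqref{A-1}, and then using the jump relations \eqref{jump-normal-single-layer}--\eqref{jump-double-layer}, is $\Kcal_{D,1}^k[\phi]$ of \eqref{K-1}. (Alternatively the second estimate follows from the first together with the expansion of $\Kcal_{D_\ep}^k$ contained in \eqref{asymptotic-double-layer}, after transposing in $L^2(\p D_\ep)$ and keeping track of the Jacobian weight $1-\ep h\tau+O(\ep^2)$.)

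The step I expect to be the main obstacle is the uniform-in-$\ep$ estimate $\le C\ep^2\|\phi\|_{L^2(\p D)}$, with $C=C(k,\|X\|_{\mathcal{C}^2},\|h\|_{\mathcal{C}^1})$, on the operator whose kernel is the order-$\ep^{\ge 2}$ remainder. Once the orders $0$ and $1$ are subtracted, this remainder kernel is, for $\ep$ small, of exactly the types met in Section~3.2: weakly (logarithmically) singular in the single layer case, and, after the reorganization through \eqref{Laplacian-local} and the jump relations, of Cauchy-integral type in the $(\Kcal_{D_\ep}^k)^*$ case, with all its coefficients dominated by $C\ep^2$. Here one uses that for $\ep$ small $\Psi_\ep$ is a diffeomorphism of $\p D$ onto $\p D_\ep$, so that $\tilde x=\tilde y$ occurs only when $x=y$ and $|\tilde x-\tilde y|$ stays comparable to $|x-y|$; that $1+2\ep F+\ep^2 G$ stays bounded away from zero; and the continuity at $0$ of $-\tfrac{ik^p}{4}{H_0^1}^{(p)}(kr)\,r^p$ for $p\ge 1$ recorded in Section~3.1. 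The required $L^2(\p D)$-boundedness, uniform in the parameters, then follows from the Coifman--McIntosh--Meyer theorem \cite{CMM} --- for the log-singular piece one may instead estimate the kernel directly near the diagonal --- exactly as invoked for $\Dcal_{D,n}^k$ and $\Acal_{D,n}^k$; the boundedness of $\Scal_{D,1}^k$ and $\Kcal_{D,1}^k$ themselves is the content of \cite{Z}. The delicate point is precisely that the $\ep$-expansion of a kernel with a nonintegrable diagonal must be shown to be $O(\ep^2)$ in operator norm, not merely pointwise --- i.e. that the first-order subtraction genuinely cancels the obstruction --- which is where the explicit control on $F$, $G$ and the reorganization via \eqref{Laplacian-local} do the work.
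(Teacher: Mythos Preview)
The paper does not actually prove this proposition: it is stated with the sentence ``It was proved in \cite{Z} that \ldots\ the following proposition holds,'' so there is no in-paper proof to compare against. Your approach --- pull back the single layer and its normal derivative via $\Psi_\ep$, expand the kernel in $\ep$ using the identities \eqref{tilde-x-y}--\eqref{1-over-x-y} together with the Jacobian \eqref{sigma-tilde}, and control the remainder by Coifman--McIntosh--Meyer --- is exactly the methodology the paper employs for the double layer in the derivation of \eqref{asymptotic-double-layer}--\eqref{asymptotic-normal-double-layer}, and is presumably what \cite{Z} does. Your identification of the order-$\ep$ kernel for the single layer with \eqref{S-1} is correct, and your plan for the normal derivative (work on $(\Kcal_{D_\ep}^k)^*$ via the jump relation and reuse $\mathbb{H}_n$, $\mathbb{N}_n$, plus the Jacobian factor) is the right one; the reorganization through \eqref{Laplacian-local} to replace $\partial^2_{\nu(x)^2}\Gamma_k$ by tangential and zeroth-order pieces is precisely what makes the order-$\ep$ operator recognizable as \eqref{K-1}. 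Your flagged ``main obstacle'' --- that the $O(\ep^2)$ bound must hold in operator norm, not just pointwise --- is exactly the nontrivial analytic content, and your proposed resolution (the logarithmic piece is weakly singular hence estimated by Schur/Young on the diagonal, the Cauchy-type piece by \cite{CMM}, uniformly since $F,G$ are bounded by $\|X\|_{\mathcal{C}^2}\|h\|_{\mathcal{C}^1}$) is correct.
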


\section{Proof of Theorem \ref{main-Theorem}}\label{proof-theorem1}
The solutions of \eqref{u} and \eqref{uep} are given by (see \cite{book-1,book-CK,book-Kirsch})
\begin{align}\label{representation-u}
\ds u^s(x)=\Scal^k_{D}\big(\pd{u^s}{\nu}\big)(x)-\Dcal^k_{D}\big(u^s\big)(x), \q x \mbox{ in }\RR^2\backslash \overline{D},
\end{align}
and
\begin{align}\label{representation-u-ep}
\ds u_\ep^s(x)=\Scal^k_{D_\ep}\big(\pd{u_\ep^s}{\nu}\big)(x)-\Dcal^k_{D_\ep}\big(u_\ep^s\big)(x), \q x \mbox{ in }\RR^2\backslash \overline{D_\ep}.
\end{align}

The following lemma holds.
\begin{lem}\label{estimes-Energy-Lemma}
 Let  $u^s$ and $u^s_\ep$ be the solutions of \eqref{u} and \eqref{uep}, respectively. For the case
 of a sound-soft obstacle, we suppose that $k^2$
is not an eigenvalue of $-\Delta$ on $D$ with Neumann boundary condition and $u^{in} \in \mathcal{C}^1(\p D)$, while for the case of a sound-hard obstacle, we suppose that $k^2$
is not an eigenvalue of $-\Delta$ on $D$ with Dirichlet boundary condition and $u^{in} \in \mathcal{C}^2(\p D)$.
 The following estimates hold:
\begin{align}\label{estimation-energy-1}
\ds \Big \|u^s_\ep \circ \Psi_\ep -u^s  \Big \|_{L^{2}(\p D)}\leq C \ep,
\end{align}
and
\begin{align}\label{estimation-energy-2}
\ds \bigg \|\pd{u^s_\ep}{\nu} \circ \Psi_\ep -\pd{u^s}{\nu}  \bigg \|_{L^2(\p D)}\leq C \ep,
\end{align}
with a constant $C$ independent of $\ep$.
\end{lem}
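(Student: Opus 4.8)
The plan is to exploit the integral representations \eqref{representation-u} and \eqref{representation-u-ep} together with the boundary conditions on $\p D$ and $\p D_\ep$ and the asymptotic expansions of the layer potentials established in Section~3. I will treat the sound-soft case in detail; the sound-hard case is symmetric, with the roles of $\Scal_D^k$ and $\Dcal_D^k$ (and of Lemma~\ref{lemma-invertible}(1) and (2)) interchanged, and it is exactly there that the extra regularity $u^{in}\in\mathcal{C}^2(\p D)$ is needed so that the operators $\Dcal_{D,1}^k$, $\Acal_{D,1}^k$ can be applied to the relevant densities.

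First I would reformulate the two exterior problems as boundary integral equations. For the sound-soft obstacle, applying the jump relation \eqref{jump-double-layer} and \eqref{jump-normal-single-layer} to the representation of $u^s$ on $\p D$ and using $u^s=-u^{in}$ there, one obtains an integral equation for the Cauchy data pair $(u^s,\p u^s/\p\nu)$; combining it with the interior relation coming from Green's formula yields, after the standard manipulation, that $\psi:=\p u^s/\p\nu\in L^2(\p D)$ solves an equation of the form $\big(-\tfrac12 I+(\Kcal_D^k)^\ast\big)[\psi]=g$ with right-hand side $g$ built linearly and boundedly from $u^{in}|_{\p D}$ and (after using $\Dcal_D^k$) its tangential data, and $u^s|_{\p D}=-u^{in}$ is already known. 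The analogous reduction on $\p D_\ep$, using $u_\ep^s=-u^{in}$ on $\p D_\ep$, produces $\psi_\ep:=\p u_\ep^s/\p\nu\in L^2(\p D_\ep)$ solving $\big(-\tfrac12 I+(\Kcal_{D_\ep}^k)^\ast\big)[\psi_\ep]=g_\ep$. By Lemma~\ref{lemma-invertible}(2) the limiting operator is invertible, so both equations are uniquely solvable and the inverse is bounded.

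Next I would pull everything back to the fixed boundary $\p D$ via $\Psi_\ep$ and subtract. Writing $\psi_\ep\circ\Psi_\ep$ in place of $\psi_\ep$, the Proposition after \eqref{asymptotic-normal-single-layer} (and \eqref{asymptotic-double-layer}) gives $(\Kcal_{D_\ep}^k)^\ast\circ\Psi_\ep=(\Kcal_D^k)^\ast+\ep\, R_\ep$ with $\|R_\ep\|_{L^2(\p D)\to L^2(\p D)}\le C$, and similarly $g_\ep\circ\Psi_\ep=g+O(\ep)$ in $L^2(\p D)$, the $O(\ep)$ being controlled by $\|u^{in}\|_{\mathcal{C}^1(\p D)}$, $\|X\|_{\mathcal{C}^2}$, $\|h\|_{\mathcal{C}^1}$, and $k$ (here one uses that $u^{in}$ is smooth near $\p D$ and expands $u^{in}(\tilde x)=u^{in}(x)+\ep h(x)\p u^{in}/\p\nu(x)+O(\ep^2)$). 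Subtracting the two boundary equations and using invertibility of $-\tfrac12 I+(\Kcal_D^k)^\ast$ yields
\begin{align*}
\big(-\tfrac12 I+(\Kcal_D^k)^\ast\big)\big[\psi_\ep\circ\Psi_\ep-\psi\big]=\big(g_\ep\circ\Psi_\ep-g\big)-\ep\,R_\ep[\psi_\ep\circ\Psi_\ep],
\end{align*}
so that $\|\psi_\ep\circ\Psi_\ep-\psi\|_{L^2(\p D)}\le C\big(\ep+\ep\|\psi_\ep\circ\Psi_\ep\|_{L^2(\p D)}\big)$; a uniform a priori bound $\|\psi_\ep\circ\Psi_\ep\|_{L^2(\p D)}\le C$ (obtained the same way, from $\|g_\ep\circ\Psi_\ep\|_{L^2}\le C$ and the fact that $-\tfrac12 I+(\Kcal_D^k)^\ast+\ep R_\ep$ is invertible with uniformly bounded inverse for $\ep$ small) then absorbs the last term and gives \eqref{estimation-energy-2}. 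Finally \eqref{estimation-energy-1} follows at once, since $u_\ep^s\circ\Psi_\ep-u^s = -u^{in}\circ\Psi_\ep+u^{in} = O(\ep)$ in $L^2(\p D)$, again by the Taylor expansion of $u^{in}$ and $u^{in}\in\mathcal{C}^1(\p D)$.

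The main obstacle I anticipate is the bookkeeping in the reduction to a single uniformly invertible boundary integral equation: one must make sure that the density whose asymptotics are invoked lives in the right space (so that the $\mathcal{C}^2$-regularity hypothesis on $u^{in}$ in the sound-hard case is genuinely used when applying $\Dcal_{D,1}^k$ or $\Acal_{D,1}^k$, which require $\phi\in\mathcal{C}^2(\p D)$), and that the perturbation operator $R_\ep$ coming from the layer-potential expansions is bounded on $L^2(\p D)$ uniformly in $\ep$ — this is exactly the Coifman–McIntosh–Meyer boundedness quoted after \eqref{asymptotic-normal-single-layer}. Once the uniform invertibility of the perturbed operator for small $\ep$ is in hand, the estimates are a one-line Neumann-series/absorption argument.
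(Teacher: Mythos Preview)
Your plan is correct and essentially identical to the paper's: use the boundary condition directly for the ``easy'' estimate (the paper does this first and then feeds it into the right-hand side $\partial\Dcal_D^k[u_\ep^s\circ\Psi_\ep-u^s]/\partial\nu$ of the subtracted integral equation; you postpone it to the end and encode the right-hand side through $u^{in}$ instead, which amounts to the same thing), and use the layer-potential asymptotics together with Lemma~\ref{lemma-invertible} for the other. One small correction on your anticipated obstacle: the $\mathcal{C}^2$ hypothesis on $u^{in}$ in the sound-hard case is not needed in order to apply $\Dcal_{D,1}^k$ or $\Acal_{D,1}^k$ (this lemma only uses the zeroth-order expansions, with $O(\ep)$ remainder, which require no extra regularity of the density), but rather so that the mean-value theorem can be applied to $\nabla u^{in}$ when controlling $\partial u^{in}/\partial\nu_{\p D_\ep}(\tilde x)-\partial u^{in}/\partial\nu(x)$.
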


\begin{proof} \textbf{Sound-soft obstacle.}
 Let $x \in \p D$, then $\tilde x=\Psi_\ep(x)=x+\ep h(x)\nu(x) \in \p D_\ep$.  We have
\begin{align*}
\ds u^s_\ep(\tilde x)- u^s(x)=u^{in}( x)- u^{in}\big(x+\ep h(x)\nu(x)\big),
\end{align*}
from which it follows by using  the mean value theorem that $\|u^s_\ep \circ \Psi_\ep -u^s   \|_{L^{\infty}(\p D)}\leq C \ep$. Then,
one can see   from the injection continuous $L^{\infty}(\p D)\hookrightarrow L^2(\p D)$ that
\eqref{estimation-energy-1} is true.

It follows from \eqref{representation-u}, \eqref{representation-u-ep}, and  the jump formula \eqref{jump-normal-single-layer} that
\begin{align}\label{jump-estimates}
\ds \Big(-\frac{1}{2}I+ (\Kcal_{D}^k)^{*}\Big)\big(\pd{u^s}{\nu}\big)( x)
=\pd{\Dcal^k_{D}(u^s)}{\nu}( x), \q x \in \p D,
\end{align}
and
\begin{align*}
\ds \Big(-\frac{1}{2}I+ (\Kcal_{D_\ep}^k)^{*}\Big)\big(\pd{u^s_\ep}{\nu}\big)(\tilde x)
=\pd{\Dcal_{D_\ep}^k(u^s_\ep)}{\nu}(\tilde x), \q \tilde x \in \p D_\ep.
\end{align*}
The following  expansion follows from \eqref{asymptotic-normal-single-layer}, \eqref{asymptotic-normal-double-layer}, and the above equation 
\begin{align}\label{integral-equation-asymptotic}
\ds \Big(-\frac{1}{2}I +(\Kcal_{D}^k)^{*}\Big)\big(\pd{u^s_\ep}{\nu}\circ \Psi_\ep \big)( x)
=\pd{\Dcal^k_{D}(u^s_\ep\circ \Psi_\ep)}{\nu}( x)+O(\ep), \q x \in \p D.
\end{align}
Subtracting  \eqref{jump-estimates} from  \eqref{integral-equation-asymptotic} yields
\begin{align*}
\ds \Big(-\frac{1}{2}I+ (\Kcal_{D}^k)^{*}\Big)\big(\pd{u^s_\ep}{\nu}\circ \Psi_\ep-\pd{u}{\nu}\big)( x)
=\pd{\Dcal^k_{D}(u^s_\ep\circ \Psi_\ep-u)}{\nu}( x)+O(\ep), \q x \in \p D.
\end{align*}
Using the fact   that $-({1}/{2})I+(\Kcal_{D}^k)^{*}$ is invertible on $L^2(\p D)$ and \eqref{estimation-energy-1} to deduce \eqref{estimation-energy-2}.

\textbf{Sound-hard obstacle.} For $\tilde x=x+\ep h(x) \nu(x) \in \p D_\ep$. We have
\begin{align*}
\ds \pd{ u^s_\ep}{\nu}(\tilde x)- \pd{u^s}{\nu}(x)=\pd{u^{in}}{\nu}( x)- \pd{u^{in}}{\nu}(\tilde x).
\end{align*}
Since, by using  the mean value theorem and
 the injection continuous $L^{\infty}(\p D)\hookrightarrow L^2(\p D)$, we get
\eqref{estimation-energy-2}. It follows from  \eqref{representation-u}, \eqref{representation-u-ep}, and the jump formula \eqref{jump-double-layer} that
\begin{align}\label{jump-estimates-hard}
\ds \Big(\frac{1}{2}I+ \Kcal_{D}^k\Big)\big(u^s\big)( x)
=\Scal^k_{D}\big(\pd{u^s}{\nu}\big)( x),\q x \in \p D,
\end{align}
and
\begin{align}\label{above}
\ds \Big(\frac{1}{2}I+ \Kcal_{D_\ep}^k\Big)(u^s_\ep)(\tilde x)
=\Scal^k_{D_\ep}\big(\pd{u^s_\ep}{\nu}\big)(\tilde x), \q \tilde x \in \p D_{\ep}.
\end{align}
According to  \eqref{asymptotic-normal-double-layer},  \eqref{asymptotic-normal-single-layer},  and \eqref{above},
the following asymptotic expansion holds
\begin{align}\label{integral-equation-asymptotic-hard}
\ds \Big(\frac{1}{2}I +\Kcal_{D}^k \Big)\big(u^s_\ep\circ \Psi_\ep\big)( x)
=\Scal^k_{D}(\pd{u^s_\ep}{\nu}\circ \Psi_\ep)( x)+O(\ep), \q x \in \p D.
\end{align}
From  \eqref{jump-estimates-hard} and \eqref{integral-equation-asymptotic-hard}, we get
\begin{align*}
\ds \Big(\frac{1}{2}I+ \Kcal_{D}^k\Big)\big(u^s_\ep\circ \Psi_\ep-u^s\big)( x)
=\Scal^k_{D}\big(\pd{u^s_\ep}{\nu}\circ \Psi_\ep-\pd{u^s}{\nu}\big)( x)+O(\ep), \q x \in \p D.
\end{align*}
Clearly the estimate \eqref{estimation-energy-1} immediately follows from \eqref{estimation-energy-2} and the fact   that $({1}/{2})I+\Kcal_{D}^k$ is invertible on $L^2(\p D)$. Thus   the proof of Lemma  \ref{estimes-Energy-Lemma} is complete.
\end{proof}

Now  we are ready to prove Theorem \ref{main-Theorem}. Let $v^s$ be the solution of \eqref{v}. It then follows from \eqref{main-equality} that
\begin{align*}
\ds &\int_{\p D }\Big (\pd{\Scal_D^k[\phi]}{\nu}\Big|_{+}v^s-\Scal^k_{D}[\phi]\pd{v^s}{\nu}\Big)d\sigma=
 \int_{\p D }\Big (\pd{\Dcal_D^k[\psi]}{\nu}v^s-\Dcal^k_{D}[\psi]\big|_{+}\pd{v^s}{\nu}\Big)d\sigma=0,
\end{align*}
and
\begin{align*}
\ds &\int_{\p D }\Big (\Kcal^k_{D,1}[\phi]-\Acal^k_{D,1}[\psi]\Big)v^sd\sigma-
 \int_{\p D }\Big (\Scal^k_{D,1}[\phi]-\Dcal^k_{D,1}[\psi]\Big)\pd{v^s}{\nu}d\sigma\\
\nm \ds & =-\int_{\p D}h\Big(\pd{\Scal_{D}^k[\phi]}{\nu}\Big|_{+}-\pd{\Dcal_{D}^k[\psi]}{\nu}\Big)\pd{v^s}{\nu} d\sigma\\
\nm \ds &\q+\int_{\p D}\bigg[ \tau h \Big(\pd{\Scal_{D}^k[\phi]}{\nu}\Big|_{+}-\pd{\Dcal_{D}^k[\psi]}{\nu}\Big)
- \frac{d}{dt}\bigg(h \frac{d}{dt}\Big(\Scal_D^k[\phi]-\Dcal_D^k[\psi]\big|_{+}\Big)\bigg)\\
\nm \ds &\q\q\q\q\q\q\q\q\q\q\q\q\q\q\q\q\q\q\q-h k^2 \Big(\Scal_D^k[\phi]-\Dcal_D^k[\psi]\big|_{+}\Big)\bigg] v^s d\sigma.
\end{align*}
Put $\ds \phi=\pd{u^s_\ep}{\nu}\circ \Psi_\ep$ and $\ds \psi={u^s_\ep}\circ \Psi_\ep$.  It follows from \eqref{estimation-energy-1} and \eqref{estimation-energy-2} that
\begin{align*}
\Scal_D^k[\phi]-\Dcal_D^k[\psi]\big|_{+}=u^s+O(\ep) \q\mbox{and}\q \pd{\Scal_{D}^k[\phi]}{\nu}\Big|_{+}-\pd{\Dcal_{D}^k[\psi]}{\nu}=\pd{u^s}{\nu}+O(\ep)\q  \mbox{on } \p D,
\end{align*}
and then the asymptotic expansions  in Theorem \ref{main-Theorem} of $
[u^s_\ep, v^s,\Psi_\ep, D]
$
are proved  as desired.

\section{Asymptotic expansions for the DNO and NDO}
For a given bounded  domain $D$ with $\mathcal{C}^2$-boundary. We introduce the DNO for the exterior Helmholtz problem   which is   defined by

\begin{align*}
\ds \mathcal{N}_0(f)=\pd{u}{\nu}\big|_{\p D},
\end{align*}
where $u$ is the solution to
\begin{align}\label{u-DtN}
 \left\{ \begin{array}{l}
\ds \Delta u +k^2 u=0 \quad\mbox{in }\RR^2\backslash \overline{D},\\
\nm\ds\bigg|\pd{u}{r}-ik u \bigg|=O\Big(1/r^{\frac{3}{2}}\Big)\q \mbox{as } r=|x|\rightarrow +\infty \q \mbox{uniformy in }\frac{x}{|x|},\\
\nm\ds u(x)=f(x) \quad \mbox{for } x\in  \p D.\end{array}\right.
\end{align}
 Let $\mathcal{N}_{\ep}[f]$ be the perturbed DNO resulting from small perturbations of $D$, namely,
$$
\mathcal{N_\ep}(f)(x)=\pd{u_\ep}{\nu}\circ \Psi_\ep(x),\q\q \Psi_\ep(x)=x+\ep h(x) \nu(x)\q \mbox{for }
x\in \p D,
$$
where
\begin{align}\label{u-DtN-ep}
 \left\{ \begin{array}{l}
\ds \Delta u_\ep +k^2 u_\ep=0 \quad\mbox{in }\RR^2\backslash \overline{D_\ep},\\
\nm\ds\bigg|\pd{u_\ep}{r}-ik u_{\ep} \bigg|=O\Big(1/r^{\frac{3}{2}}\Big)\q \mbox{as } r=|x|\rightarrow +\infty \q \mbox{uniformy in }\frac{x}{|x|},\\
\nm\ds u_\ep\circ \Psi_\ep(x)=f(x) \quad \mbox{for } x\in  \p D.\end{array}\right.
\end{align}
In connection with the results for rough non-periodic surfaces \cite{Coifman,Milder} and periodic interfaces \cite{LLZ}. The following theorem holds:
\begin{thm} \label{theorem-D-t-N} Suppose that $k^2$
is not an eigenvalue of $-\Delta$ on $D$ with Neumann boundary condition
and $f \in \mathcal{C}^2(\p D)$.
The following expansion  holds:
$$
\mathcal{N}_{\ep }(f)(x)=\mathcal{N}_{0 }(f)(x)+ \ep \Big(-\frac{1}{2}I+(\Kcal_{D}^k\big)^*\Big)^{-1}\bigg(\Acal^k_{D,1}[f]-\Kcal^k_{D,1}\big[\mathcal{N}_{0 }(f)\big]
\bigg)(x)+O(\ep^2),
$$
where the operators $\Acal^k_{D,1}$  and $\Kcal^k_{D,1}$ are defined in \eqref{A-1} and \eqref{K-1}, respectively. Here the remainder $O(\ep^2)$ depends  only
on the $\mathcal{C}^2$-norm of $X$, the $\mathcal{C}^1$-norm of $h$,  and $k$.
\end{thm}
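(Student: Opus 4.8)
\medskip\noindent\emph{Proof strategy.} The plan is to follow the same route as in the proof of Theorem~\ref{main-Theorem}: I would turn the two exterior Dirichlet problems \eqref{u-DtN} and \eqref{u-DtN-ep} into boundary integral equations for the unknown normal derivatives, transplant the equation living on $\p D_\ep$ back onto $\p D$ by composition with $\Psi_\ep$, and then expand the transplanted operators with the help of the asymptotic formulae of Section~3.

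First I would record the integral equations. By Green's representation formula, exactly as in \eqref{representation-u}, $u=\Scal^k_D[\p u/\p\nu]-\Dcal^k_D[f]$ in $\RR^2\setminus\overline D$. Taking the exterior normal trace and using the jump relation \eqref{jump-normal-single-layer} together with the absence of a jump \eqref{jump-normal-double-layer-potential}, and recalling that $\Acal^k_{D,0}=\p(\Dcal^k_D)/\p\nu$, one arrives at
\begin{align*}
\Big(-\tfrac12 I+(\Kcal^k_D)^{*}\Big)[\Ncal_0(f)]=\Acal^k_{D,0}[f]\qquad\text{on }\p D .
\end{align*}
Since $k^2$ is not a Neumann eigenvalue of $D$, the second part of Lemma~\ref{lemma-invertible} makes $-\tfrac12 I+(\Kcal^k_D)^{*}$ invertible on $L^2(\p D)$, so $\Ncal_0(f)=\big(-\tfrac12 I+(\Kcal^k_D)^{*}\big)^{-1}\Acal^k_{D,0}[f]$. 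The very same computation on $\p D_\ep$, with $\phi_\ep:=\p u_\ep/\p\nu$ and $\psi_\ep:=u_\ep|_{\p D_\ep}$, produces $\big(-\tfrac12 I+(\Kcal^k_{D_\ep})^{*}\big)[\phi_\ep]=\p\Dcal^k_{D_\ep}[\psi_\ep]/\p\nu$ on $\p D_\ep$.

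Next I would compose this last identity with $\Psi_\ep$. Writing $\phi:=\phi_\ep\circ\Psi_\ep=\Ncal_\ep(f)$ and observing that $\psi_\ep\circ\Psi_\ep=f$ \emph{exactly} (this is the boundary condition in \eqref{u-DtN-ep}), I would invoke the layer-potential expansions of Section~3. Averaging \eqref{asymptotic-normal-single-layer} over the two sides $\pm$ --- the $\pm\tfrac12$ jumps cancel in the average, and $\Kcal^k_{D,1}$ carries no jump --- gives $(\Kcal^k_{D_\ep})^{*}[\phi_\ep]\circ\Psi_\ep=(\Kcal^k_D)^{*}[\phi]+\ep\,\Kcal^k_{D,1}[\phi]+O(\ep^2)$ in $L^2(\p D)$, while \eqref{asymptotic-normal-double-layer} with $N=1$ (the operator $\Acal^k_{D,1}$ likewise carrying no jump) gives $\p\Dcal^k_{D_\ep}[\psi_\ep]/\p\nu\circ\Psi_\ep=\Acal^k_{D,0}[f]+\ep\,\Acal^k_{D,1}[f]+O(\ep^2)$. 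Consequently $\phi$ solves
\begin{align*}
\Big(-\tfrac12 I+(\Kcal^k_D)^{*}+\ep\,\Kcal^k_{D,1}+\ep^2 R_\ep\Big)[\phi]=\Acal^k_{D,0}[f]+\ep\,\Acal^k_{D,1}[f]+\ep^2 r_\ep ,
\end{align*}
where $R_\ep$ is a family of operators bounded on $L^2(\p D)$ uniformly in $\ep$ and $\|r_\ep\|_{L^2(\p D)}\le C$, with $C$ depending only on $k$, $\|X\|_{\mathcal{C}^2}$ and $\|h\|_{\mathcal{C}^1}$.

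Finally, since $-\tfrac12 I+(\Kcal^k_D)^{*}$ is invertible, for $\ep$ small enough the operator on the left of the last display is invertible as well, with inverse $\big(I+\ep\,(-\tfrac12 I+(\Kcal^k_D)^{*})^{-1}\Kcal^k_{D,1}+O(\ep^2)\big)^{-1}\big(-\tfrac12 I+(\Kcal^k_D)^{*}\big)^{-1}$. Expanding this Neumann series to first order, applying it to the right-hand side, and using the zeroth-order identity $\Ncal_0(f)=\big(-\tfrac12 I+(\Kcal^k_D)^{*}\big)^{-1}\Acal^k_{D,0}[f]$, one first reads off $\phi=\Ncal_0(f)+O(\ep)$ and then
\begin{align*}
\Ncal_\ep(f)=\phi=\Ncal_0(f)+\ep\Big(-\tfrac12 I+(\Kcal^k_D)^{*}\Big)^{-1}\Big(\Acal^k_{D,1}[f]-\Kcal^k_{D,1}[\Ncal_0(f)]\Big)+O(\ep^2),
\end{align*}
where replacing $\Kcal^k_{D,1}[\phi]$ by $\Kcal^k_{D,1}[\Ncal_0(f)]$ costs only $O(\ep^2)$ because $\phi=\Ncal_0(f)+O(\ep)$. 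The step that needs the most care --- and where the hypotheses and the $\mathcal{C}^2$-regularity of $f$ are genuinely used --- is the uniform-in-$\ep$ bookkeeping behind the second display: one must check that the $\ep^2$-remainders in \eqref{asymptotic-normal-single-layer}--\eqref{asymptotic-normal-double-layer} really assemble into $L^2(\p D)$-bounded operators whose norm is $O(\ep^2)$ (this is the content of those estimates at $N=1$), and that $-\tfrac12 I+(\Kcal^k_{D_\ep})^{*}$, and hence the transplanted operator, stays boundedly invertible --- which is a Neumann-series perturbation of the invertible operator $-\tfrac12 I+(\Kcal^k_D)^{*}$, legitimate once $\ep$ is small relative to quantities depending only on $k$, $\|X\|_{\mathcal{C}^2}$ and $\|h\|_{\mathcal{C}^1}$.
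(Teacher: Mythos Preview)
Your proposal is correct and follows essentially the same approach as the paper: represent $u_\ep$ by Green's formula on $\p D_\ep$, take the exterior normal trace, compose with $\Psi_\ep$, expand the transplanted operators via \eqref{asymptotic-normal-double-layer} and \eqref{asymptotic-normal-single-layer}, and invert $-\tfrac12 I+(\Kcal_D^k)^*$ using Lemma~\ref{lemma-invertible}. The only cosmetic difference is that the paper subtracts the $\ep=0$ integral equation from the transplanted one before inverting, whereas you package the perturbation as $-\tfrac12 I+(\Kcal_D^k)^*+\ep\,\Kcal_{D,1}^k+\ep^2 R_\ep$ and invert by a Neumann series; both routes require the same bootstrap (first $\Ncal_\ep(f)=\Ncal_0(f)+O(\ep)$, then substitute into $\Kcal_{D,1}^k[\cdot]$) and yield the same remainder control. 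Your averaging over the $\pm$ sides is harmless but unnecessary: since $(\tfrac12 I)[\phi_\ep]\circ\Psi_\ep=\tfrac12\phi$ exactly, the expansion for $(\Kcal_{D_\ep}^k)^*[\phi_\ep]\circ\Psi_\ep$ already follows from either one of the two sides in \eqref{asymptotic-normal-single-layer}.
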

\begin{proof} Let $u_\ep$ be the solution  to \eqref{u-DtN-ep}. Then the following representation formula holds
\begin{align*}
\ds u_\ep(x)=\Scal_{D_{\ep}}^{k}\big[\pd{u_\ep}{\nu}\big](x)- \Dcal_{D_{\ep}}^k[u_\ep](x), \q  x \in \RR^2\backslash \overline{D_\ep}.
\end{align*}
Therefore the jump formula \eqref{jump-normal-single-layer} yields
\begin{align*}
\pd{u_\ep }{\nu}\circ \Psi_\ep(x)=\Big(\frac{1}{2}I+\big(\Kcal_{D_\ep}^k\big)^*\Big)\big[\pd{u_\ep}{\nu}\big]\circ \Psi_\ep(x)-\pd{\Dcal_{D_{\ep}}^k[u_\ep]}{\nu}\circ \Psi_\ep(x), \q x \in \p D.
\end{align*}
It then follows from  \eqref{asymptotic-normal-double-layer} and \eqref{asymptotic-normal-single-layer} that
\begin{align}\label{01}
\ds \Big(-\frac{1}{2}I+\big(\Kcal_{D}^k\big)^*\Big)\big[\mathcal{N}_{\ep }(f)
\big]=\pd{\Dcal_{D}^k[f]}{\nu}+\ep \bigg(\Acal^k_{D,1}[f]-\Kcal^k_{D,1}\big[\mathcal{N}_{\ep }(f)\big]
\bigg)+O(\ep^2)\q \mbox{on } \p D.
\end{align}
Similarly, one can checks that 
\begin{align}\label{02}
\ds \Big(-\frac{1}{2}I+\big(\Kcal_{D}^k\big)^*\Big)\big[\mathcal{N}_{0 }(f)
\big]=\pd{\Dcal_{D}^k[f]}{\nu}\q \mbox{on } \p D.
\end{align}
Subtraction \eqref{02} from \eqref{01} yields
\begin{align*}
\ds \Big(-\frac{1}{2}I+\big(\Kcal_{D}^k\big)^*\Big)\big[\mathcal{N}_{\ep }(f)-\mathcal{N}_{0 }(f)
\big]=\ep \bigg(\Acal^k_{D,1}[f]-\Kcal^k_{D,1}\big[\mathcal{N}_{\ep }(f)\big]
\bigg)+O(\ep^2)\q \mbox{on } \p D.
\end{align*}
If $k^2$
is not an eigenvalue of $-\Delta$ on $D$ with Neumann boundary condition, then we have from Lemma \ref{lemma-invertible} that $-({1}/{2})I+\big(\Kcal_{D}^k\big)^*$ is invertible on $L^2(\p D)$. Hence
\begin{align*}
\ds \mathcal{N}_{\ep }(f)-\mathcal{N}_{0 }(f)
=\ep \Big(-\frac{1}{2}I+\big(\Kcal_{D}^k\big)^*\Big)^{-1}\bigg(\Acal^k_{D,1}[f]-\Kcal^k_{D,1}\big[\mathcal{N}_{\ep }(f)\big]\bigg)+O(\ep^2)\q \mbox{on } \p D.
\end{align*}
Note That
\begin{align}\label{estimate-DtN}
\ds \big\|\mathcal{N}_{\ep }(f)-\mathcal{N}_{0 }(f)\big \|_{L^2(\p D)}\leq C\ep.
\end{align}
This completes the proof.
\end{proof}

Now,  let us  introduce the NtD  operator for the exterior Helmholtz problem
which is defined by
$$
\Lambda_0[g]=v \big|_{\p D},
$$
where $v$ is the solution to
\begin{align}\label{v-0}
 \left\{ \begin{array}{l}
\ds \Delta v +k^2 v=0 \quad\mbox{in }\RR^2\backslash \overline{D},\\
\nm\ds\bigg|\pd{v}{r}-ik v \bigg|=O\Big(1/r^{\frac{3}{2}}\Big)\q \mbox{as } r=|x|\rightarrow +\infty \q \mbox{uniformy in }\frac{x}{|x|},\\
\nm\ds \pd{v}{\nu}(x)=g(x) \quad \mbox{for } x\in  \p D.\end{array}\right.
\end{align}
We let $\Lambda_{\ep}[g]$ be the perturbed NtD operator caused par $D_\ep$, that is,
$$
\Lambda_{\ep}[g](x)=v_\ep\circ \Psi(x),
$$
where
\begin{align}\label{v-N-t-D}
 \left\{ \begin{array}{l}
\ds \Delta v_\ep +k^2 v_\ep=0 \quad\mbox{in }\RR^2\backslash \overline{D_\ep},\\
\nm\ds\bigg|\pd{v_\ep}{r}-ik v_{\ep} \bigg|=O\Big(1/r^{\frac{3}{2}}\Big)\q \mbox{as } r=|x|\rightarrow +\infty \q \mbox{uniformy in }\frac{x}{|x|},\\
\nm\ds \pd{v_\ep}{\nu}\big(x+\ep h(x)\nu(x)\big)=g(x) \quad \mbox{for } x\in  \p D,\end{array}\right.
\end{align}
The following theorem holds.
\begin{thm} Suppose that $k^2$
is not an eigenvalue of $-\Delta$ on $D$ with Dirichlet boundary condition
and $g\in \mathcal{C}^2(\p D)$.
The following asymptotic formula holds:
$$
{\Lambda}_{\ep }[g](x)=\Lambda_{0 }[g](x)+ \ep \Big(\frac{1}{2}I+\Kcal_{D}^k\Big)^{-1}\bigg(\Scal^k_{D,1}[g]-\Dcal^k_{D,1}\Big(\Lambda_{0 }[g]\Big)\bigg)(x)+O(\ep^2),
$$
where the operators $\Dcal^k_{D,1}$  and $\Scal^k_{D,1}$ are defined in \eqref{D-1} and \eqref{S-1}, respectively. Here  the remainder $O(\ep^2)$ depends  only
on the $\mathcal{C}^2$-norm of $X$, the $\mathcal{C}^1$-norm of $h$,  and $k$.
\end{thm}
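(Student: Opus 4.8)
The proof runs parallel to that of Theorem \ref{theorem-D-t-N}, the only change being that the governing boundary integral equation is now of the form $\big(\frac{1}{2}I+\Kcal_D^k\big)[\,\cdot\,]=\Scal_D^k[\,\cdot\,]$ rather than the one built from $-\frac{1}{2}I+(\Kcal_D^k)^\ast$. Let $v_\ep$ be the solution of \eqref{v-N-t-D}. Being radiating and a solution of the Helmholtz equation in $\RR^2\setminus\overline{D_\ep}$, it admits the Green representation
\begin{align*}
v_\ep(x)=\Scal_{D_\ep}^k\big[\pd{v_\ep}{\nu}\big](x)-\Dcal_{D_\ep}^k[v_\ep](x),\q x\in\RR^2\setminus\overline{D_\ep}.
\end{align*}
Taking the exterior trace on $\p D_\ep$ and invoking the jump relation \eqref{jump-double-layer} for the double layer potential gives the second-kind integral equation $\big(\frac{1}{2}I+\Kcal_{D_\ep}^k\big)[v_\ep]=\Scal_{D_\ep}^k[\pd{v_\ep}{\nu}]$ on $\p D_\ep$.

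First I would compose this identity with $\Psi_\ep$. Since by definition $\pd{v_\ep}{\nu}\circ\Psi_\ep=g$ and $v_\ep\circ\Psi_\ep=\Lambda_\ep[g]$, the asymptotic expansion \eqref{asymptotic-single-layer} of the single layer potential and the expansion \eqref{asymptotic-double-layer} (taken with $N=1$ and the exterior trace) of the double layer potential transfer the equation to $\p D$:
\begin{align*}
\Big(\tfrac{1}{2}I+\Kcal_D^k\Big)[\Lambda_\ep[g]]=\Scal_D^k[g]+\ep\Big(\Scal_{D,1}^k[g]-\Dcal_{D,1}^k\big[\Lambda_\ep[g]\big]\Big)+O(\ep^2)\q\mbox{on }\p D,
\end{align*}
with the $O(\ep^2)$ remainder controlled by $k$, $\|X\|_{\mathcal{C}^2}$ and $\|h\|_{\mathcal{C}^1}$. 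The same computation performed with $\ep=0$, applied to the solution $v$ of \eqref{v-0}, yields $\big(\frac{1}{2}I+\Kcal_D^k\big)[\Lambda_0[g]]=\Scal_D^k[g]$. Subtracting the two identities gives
\begin{align*}
\Big(\tfrac{1}{2}I+\Kcal_D^k\Big)\big[\Lambda_\ep[g]-\Lambda_0[g]\big]=\ep\Big(\Scal_{D,1}^k[g]-\Dcal_{D,1}^k\big[\Lambda_\ep[g]\big]\Big)+O(\ep^2)\q\mbox{on }\p D.
\end{align*}

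Since $k^2$ is not a Dirichlet eigenvalue of $-\Delta$ on $D$, Lemma \ref{lemma-invertible} guarantees that $\frac{1}{2}I+\Kcal_D^k$ is invertible on $L^2(\p D)$. To conclude I still need the a priori bound $\|\Lambda_\ep[g]-\Lambda_0[g]\|_{L^2(\p D)}\le C\ep$. This is obtained by moving the $\Dcal_{D,1}^k$ term to the left in the transferred equation, writing it as $\big(\frac{1}{2}I+\Kcal_D^k+\ep\Dcal_{D,1}^k\big)[\Lambda_\ep[g]]=\Scal_D^k[g]+\ep\Scal_{D,1}^k[g]+O(\ep^2)$; since $\Dcal_{D,1}^k$ is bounded on $L^2(\p D)$, for $\ep$ small the operator $\frac{1}{2}I+\Kcal_D^k+\ep\Dcal_{D,1}^k$ is a small perturbation of an invertible operator, hence invertible with uniformly bounded inverse, so $\|\Lambda_\ep[g]\|_{L^2(\p D)}\le C\|g\|_{L^2(\p D)}$ uniformly in $\ep$, and then the subtracted identity gives the $O(\ep)$ estimate. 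Substituting $\Lambda_\ep[g]=\Lambda_0[g]+O(\ep)$ into the right-hand side of that identity and applying $\big(\frac{1}{2}I+\Kcal_D^k\big)^{-1}$ yields
\begin{align*}
\Lambda_\ep[g]=\Lambda_0[g]+\ep\Big(\tfrac{1}{2}I+\Kcal_D^k\Big)^{-1}\Big(\Scal_{D,1}^k[g]-\Dcal_{D,1}^k\big(\Lambda_0[g]\big)\Big)+O(\ep^2),
\end{align*}
which is the claimed expansion. As in Theorem \ref{theorem-D-t-N}, the only genuine obstacle is keeping the remainder uniform in the presence of the implicit occurrence of $\Lambda_\ep[g]$ inside $\Dcal_{D,1}^k$, which is handled by the perturbation-of-invertible-operator argument above; the hypothesis $g\in\mathcal{C}^2(\p D)$ is what makes the explicit representations \eqref{S-1} and \eqref{D-1} of $\Scal_{D,1}^k$ and $\Dcal_{D,1}^k$ legitimate.
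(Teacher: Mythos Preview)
Your proof is correct and follows essentially the same route as the paper: Green representation for $v_\ep$, jump relation \eqref{jump-double-layer}, pull-back via $\Psi_\ep$ using the expansions \eqref{asymptotic-double-layer} and \eqref{asymptotic-single-layer}, subtraction of the $\ep=0$ identity, and inversion of $\tfrac{1}{2}I+\Kcal_D^k$ via Lemma~\ref{lemma-invertible}. Your perturbation-of-invertible-operator argument for the uniform bound on $\Lambda_\ep[g]$ is in fact more explicit than what the paper writes; the paper simply records $\|\Lambda_\ep(g)-\Lambda_0(g)\|_{L^2(\p D)}\le C\ep$ after inverting, whereas you justify why the implicit $\Lambda_\ep[g]$ inside $\Dcal_{D,1}^k$ does not spoil the estimate.
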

\begin{proof} The  solution of \eqref{v-N-t-D} is given by
\begin{align}\label{representation-v-ep}
\ds v_\ep(x)=\Scal_{D_{\ep}}^{k}\big[\pd{v_\ep}{\nu}\big](x)- \Dcal_{D_{\ep}}^k[v_\ep](x), \q  x \in \RR^2\backslash \overline{D_\ep}.
\end{align}
From the jump formula \eqref{jump-double-layer} and \eqref{representation-v-ep} we deduce
\begin{align*}
v_\ep \circ \Psi_\ep(x)=\Scal_{D_{\ep}}^{k}\big[\pd{v_\ep}{\nu}\big]\circ\Psi_\ep (x)-\big(-\frac{1}{2}I+\Kcal_{D_\ep}^k\big)\big[v_\ep\big]\circ \Psi_\ep(x), \q x \in \p D.
\end{align*}
It then follows from  \eqref{asymptotic-double-layer} and \eqref{asymptotic-single-layer} that
\begin{align}\label{01}
\ds \Big(\frac{1}{2}I+\Kcal_{D}^k\Big)\big[\Lambda_{\ep }(g)
]=\Scal_{D}^k[g]+\ep \Big(\Scal^k_{D,1}[g]
-\Dcal^k_{D,1}\big[\Lambda_{\ep }(g)\big]\Big)+O(\ep^2)\q \mbox{on } \p D.
\end{align}
In the same way as above, we can get
\begin{align}\label{02}
\ds  \Big(\frac{1}{2}I+\Kcal_{D}^k\Big)\big[\Lambda_{0}(g)
]=\Scal_{D}^k[g]\q \mbox{on } \p D.
\end{align}
Subtraction \eqref{02} from \eqref{01} yields
\begin{align*}
\ds \Big(\frac{1}{2}I+\Kcal_{D}^k\Big)\big[\Lambda_{\ep }(g)-\Lambda_{0 }(g)
\big]=\ep \Big(\Scal^k_{D,1}[g]
-\Dcal^k_{D,1}\big[\Lambda_{\ep }(g)\big]\Big)+O(\ep^2)\q \mbox{on } \p D.
\end{align*}
If $k^2$
is not an eigenvalue of $-\Delta$ on $D$ with Dirichlet  boundary condition, then we have from Lemma \ref{lemma-invertible} that $({1}/{2})I+\Kcal_{D}^k$ is invertible on $L^2(\p D)$. Hence
\begin{align*}
\ds\Lambda_{\ep }(g)-\Lambda_{0 }(g)=\ep \Big(\frac{1}{2}I+\Kcal_{D}^k\Big)^{-1}\Big(\Scal^k_{D,1}[g]
-\Dcal^k_{D,1}\big[\Lambda_{\ep }(g)\big]\Big)+O(\ep^2)\q \mbox{on } \p D.
\end{align*}
Since
\begin{align}\label{estimate-NtD}
\ds \big\|\Lambda_{\ep }(g)-\Lambda_{0 }(g)\big \|_{L^2(\p D)}\leq C\ep,
\end{align}
and the theorem is proved.
\end{proof}

Based on the same arguments given in the proofs of Theorem \ref{main-Theorem}, the following theorem holds.
\begin{thm} \label{main-theorem-reconstructing} Let $f,g \in \mathcal{C}^2(\p D)$. The following reconstructing formulas hold:

\begin{align}
\ds &\int_{\p D}\Big(\mathcal{N}_\ep(f) g-f \mathcal{N}_0(g)\Big) d\sigma \nonumber\\
\nm \ds &\q=\ep \int_{\p D}h\bigg(\frac{\p f}{\p T}\frac{\p g}{\p T}+(n-1)\tau  \mathcal{N}_0(f) g
-\mathcal{N}_0(f)\mathcal{N}_0(g)-k^2 f g\bigg)d\sigma +O(\ep^2),\label{main-Theorem-DtN}
\end{align}
and
\begin{align}
\ds &\int_{\p D}\Big(f \Lambda_{0}(g)-\Lambda_{\ep}(f) g\Big) d\sigma\nonumber\\
\nm \ds &\q =\ep \int_{\p D}h\bigg(\frac{\p \Lambda_{0}(f)}{\p T}\frac{\p \Lambda_{0}(g)}{\p T}+(n-1)\tau  f \Lambda_{0}(g)
-f g -k^2\Lambda_{0}(f)\Lambda_{0}(g)\bigg)d\sigma
+O(\ep^2), \label{main-Theorem-NtD}
\end{align}
where the remainder $O(\ep^2)$ depends  only
on the $\mathcal{C}^2$-norm of $X$, the $\mathcal{C}^1$-norm of $h$,  and $k$.
\end{thm}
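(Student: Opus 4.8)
The plan is to mimic the proof of Theorem~\ref{main-Theorem} verbatim, replacing the scattered field $u^s_\ep$ by the solution $u_\ep$ of \eqref{u-DtN-ep} (resp. $v_\ep$ of \eqref{v-N-t-D}) and the auxiliary field $v^s$ by the exterior Helmholtz solution with the same Cauchy data that makes Lemma~\ref{Important-Lemma} applicable. First I would treat the DNO formula \eqref{main-Theorem-DtN}. Let $u$ solve \eqref{u-DtN} with Dirichlet data $f$, so that $\mathcal{N}_0(f)=\p u/\p\nu|_{\p D}$, and let $w$ be the exterior solution of \eqref{u-DtN} with Dirichlet data $g$, so that $\mathcal{N}_0(g)=\p w/\p\nu|_{\p D}$; both $u$ and $w$ satisfy \eqref{v}, hence by Lemma~\ref{Important-Lemma} $\int_{\p D}(\p u/\p\nu\, w-u\,\p w/\p\nu)\,d\sigma=0$. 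Now apply exactly the Taylor-expansion computation \eqref{Taylor-normaluep}--\eqref{Taylor-uep} (two dimensions) and \eqref{Taylor-normaluep-3D}--\eqref{Taylor-uep-3D} (three dimensions) with $u^s$ replaced by $u$ and $v^s$ replaced by $w$; note that these expansions used only the PDE $\Delta u+k^2u=0$ near $\p D$ and the representation $\nu(\tilde x)=\nu(x)-\ep h'(t)T(x)+O(\ep^2)$, both of which hold here. This yields
\begin{align*}
\ds \int_{\p D}\Big(\mathcal{N}_\ep(f)\,g-f\,\mathcal{N}_0(g)\Big)d\sigma
=&\ \ep\int_{\p D}h\Big(\frac{\p f}{\p T}\frac{\p g}{\p T}+(n-1)\tau\,\mathcal{N}_0(f)g-\mathcal{N}_0(f)\mathcal{N}_0(g)-k^2 fg\Big)d\sigma\\
\nm \ds &+\int_{\p D}\Big(\frac{\p u}{\p\nu}w-u\frac{\p w}{\p\nu}\Big)d\sigma
+\ep\int_{\p D}\Big(\frac{\p u_1}{\p\nu}w-u_1\frac{\p w}{\p\nu}\Big)d\sigma+O(\ep^2),
\end{align*}
where $u_1$ is the first-order field in the FE expansion \eqref{heatExp} of $u_\ep$, which satisfies \eqref{u-l} and therefore \eqref{v}. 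By Lemma~\ref{Important-Lemma} the last two integrals vanish, giving \eqref{main-Theorem-DtN}. To make this rigorous rather than formal, one invokes the layer-potential machinery exactly as in Section~\ref{proof-theorem1}: from the estimate \eqref{estimate-DtN} and the representation $u_\ep=\Scal^k_{D_\ep}[\p u_\ep/\p\nu]-\Dcal^k_{D_\ep}[u_\ep]$ together with the asymptotic expansions \eqref{asymptotic-single-layer}--\eqref{asymptotic-normal-double-layer}, one writes $[u_\ep,w,\Psi_\ep,D]$ in terms of $\phi=(\p u_\ep/\p\nu)\circ\Psi_\ep$ and $\psi=u_\ep\circ\Psi_\ep$, uses the identities for $\Scal^k_{D,1}-\Dcal^k_{D,1}$ and $\Kcal^k_{D,1}-\Acal^k_{D,1}$ established in Section~\ref{proof-theorem1}, and substitutes $\Scal^k_D[\phi]-\Dcal^k_D[\psi]|_+=f+O(\ep)$, $\p_\nu(\Scal^k_D[\phi]-\Dcal^k_D[\psi])|_+=\mathcal{N}_0(f)+O(\ep)$ on $\p D$, the latter following from \eqref{estimate-DtN} and the invertibility of $-\tfrac12 I+(\Kcal^k_D)^*$.

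For the NDO formula \eqref{main-Theorem-NtD} I would argue symmetrically. Let $v$ solve \eqref{v-0} with Neumann data $g$, so $\Lambda_0[g]=v|_{\p D}$ and $\p v/\p\nu|_{\p D}=g$; let $z$ solve \eqref{v-0} with Neumann data $f$, so $\Lambda_0[f]=z|_{\p D}$ and $\p z/\p\nu|_{\p D}=f$; again $v$ and $z$ satisfy \eqref{v} and Lemma~\ref{Important-Lemma} gives $\int_{\p D}(f\,v-z\,g)\,d\sigma=\int_{\p D}(\p z/\p\nu\, v-z\,\p v/\p\nu)\,d\sigma=0$. Writing the quantity $\int_{\p D}(f\,\Lambda_0(g)-\Lambda_\ep(f)\,g)\,d\sigma$ and using $\Lambda_\ep(f)(x)=v^{(f)}_\ep\circ\Psi_\ep(x)$ with the FE expansion of $v^{(f)}_\ep$, one feeds the expansions \eqref{Taylor-normaluep-3D}--\eqref{Taylor-uep-3D} with the roles of trace and normal trace interchanged (here the Neumann trace is prescribed, so it is the Dirichlet trace $\Lambda_0(f)$, $\Lambda_0(g)$ that carries the tangential derivatives): this produces the claimed leading term $\ep\int_{\p D}h\big(\partial_T\Lambda_0(f)\,\partial_T\Lambda_0(g)+(n-1)\tau f\,\Lambda_0(g)-fg-k^2\Lambda_0(f)\Lambda_0(g)\big)d\sigma$, and the remaining boundary integrals vanish by Lemma~\ref{Important-Lemma}. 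Rigor is supplied by the estimate \eqref{estimate-NtD}, the representation \eqref{representation-v-ep}, and the invertibility of $\tfrac12 I+\Kcal^k_D$, in complete parallel with the sound-hard case of Lemma~\ref{estimes-Energy-Lemma}.

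The only genuine subtlety—and the step I would flag as the main obstacle—is bookkeeping the asymmetry between the two Cauchy data in each case and making sure the ``tangential-derivative'' term lands on the correct pair of functions. In Theorem~\ref{main-Theorem} both $u^s$ and $v^s$ appeared through their full Cauchy data, so the term $\partial_T u^s\,\partial_T v^s$ was symmetric; here, for the DNO, $f$ is a Dirichlet datum and $\mathcal{N}_0(f)$ is a Neumann datum, and one must check that the Taylor expansion \eqref{Taylor-uep}/\eqref{Taylor-normaluep} (which carries the $\p_T$'s on the $u$-side through the term $-\ep\,\tfrac{d}{dt}(h\,\tfrac{du^s}{dt})$) combines with the corresponding expansion on the $w$-side to leave precisely $\partial_T f\,\partial_T g$ and not, say, $\partial_T f\,\partial_T\mathcal{N}_0(g)$; this works because $w$ enters \eqref{first-term}--\eqref{second-term} only through its boundary trace $g$ and normal trace $\mathcal{N}_0(g)$, exactly as $v^s$ did. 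A parallel check is needed for the NDO, where now it is $\Lambda_0(f),\Lambda_0(g)$ (the Dirichlet traces) that appear in the tangential-derivative term. Once this matching of data is done carefully, the argument is a line-by-line transcription of Sections~\ref{proof-theorem1} and the FE computation of Section~2, and I would simply say ``the proof proceeds exactly as in Theorem~\ref{main-Theorem}, with $u^s_\ep$ replaced by $u_\ep$ (resp. $v_\ep$) and $v^s$ replaced by the exterior solution with Cauchy data $(g,\mathcal{N}_0(g))$ (resp. $(\Lambda_0(f),f)$), invoking Lemma~\ref{Important-Lemma} to kill the zeroth- and first-order remainder integrals and the estimates \eqref{estimate-DtN}, \eqref{estimate-NtD} to pass from the formal to the rigorous expansion.''
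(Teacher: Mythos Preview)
Your proposal is correct and is precisely the argument the paper has in mind: the paper's own proof consists of the single sentence ``Based on the same arguments given in the proofs of Theorem~\ref{main-Theorem}'', and what you have written is exactly the unpacking of that sentence---identifying $\int_{\p D}(\mathcal{N}_\ep(f)g-f\mathcal{N}_0(g))\,d\sigma$ (resp.\ $\int_{\p D}(f\Lambda_0(g)-\Lambda_\ep(f)g)\,d\sigma$) with the bracket $[u_\ep,w,\Psi_\ep,D]$ (resp.\ $[v_\ep^{(f)},v^{(g)},\Psi_\ep,D]$), rerunning the FE computation of Section~2 and the layer-potential argument of Section~\ref{proof-theorem1}, and invoking Lemma~\ref{Important-Lemma} together with the estimates \eqref{estimate-DtN}, \eqref{estimate-NtD} in place of Lemma~\ref{estimes-Energy-Lemma}. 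Your bookkeeping of which Cauchy datum carries the tangential derivatives is also correct.
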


\section{Reconstruction of the shape deformation}
Formulas in   \eqref{main-Theorem-equality}, \eqref{main-Theorem-DtN}, and \eqref{main-Theorem-NtD}
can be used to reconstruct an approximation of the deformation $h$  by choosing test functions of the integral in the right-hand side appropriately. Let us treat the formulas in \eqref{main-Theorem-equality}. The reconstruction of the shape deformation  from \eqref{main-Theorem-DtN} and \eqref{main-Theorem-NtD} can be done in the same way.

To illustrate this, let us consider $D$ to be the disk centred at the origin with radius $\rho$. For an integer $n$ set
\begin{align*}
\ds u_n(r,\theta)=H^{(1)}_{|n|}(kr) e^{in\theta}\q \mbox{for } r>\rho.
\end{align*}
Since $u_n$  satisfies  $(\Delta +k^2 )u_n=0$ in $\RR^2\backslash \overline{D}$ and the summerfield condition
$$\
\Big|\frac{\p u_{n}}{\p r}-ik u_{n}\Big|=O(1/r^{\frac{3}{2}})\q \mbox{as } r\rightarrow \infty.
$$
We then take $u^s=u_n$ and $v^s=u_m$ in
\eqref{main-Theorem-equality} to get
\begin{align}\label{main-reconstruction-shape}
\ds [u^s_\ep,v^s,\Psi, D]=&\ep c_{n,m}(\rho, k) \int_{\p D}h(\theta) e^{i(n+m)\theta}d\theta+O(\ep^2),
\end{align}
with  
\begin{align*}
\ds c_{n,m}(\rho, k)=\Big[-nm+\tau k \sigma_1(\rho, n,k)+k^2\sigma_1(\rho, n, k)\sigma_1(\rho, m,k)-k^2\Big]
|H^{(1)}_{|n|}(k\rho)|H^{(1)}_{|m|}(k\rho),
\end{align*}
where   $\sigma_1$ is given by
\begin{align*}
\ds \sigma_1(\rho, n,k)=k \frac{H^{(1)^{'}}_{|n|}(k\rho)}{H^{(1)}_{|n|}(k\rho)}
=-k \frac{ H^{(1)}_{|n|+1}(k\rho)}{H^{(1)}_{|n|}(k\rho)}+|n|.
\end{align*}
Formulas in  \eqref{main-Theorem-equality} 
show that the Fourier coefficients $h_p$
of $h$ can be determined from measurements
on  $\p D_\ep$
by varying the test function $v^s$, provided
that the order of magnitude of  $|h_p|$ is much larger than $\ep$.

If $D$ is a ball of   radius $\rho$. Hence $h$ can be expanded as
\begin{align}\label{coeff-hlm}
\ds h(\theta, \varphi)=\sum_{l=0}^{\infty}\sum_{m=-l}^l h_{l}^m Y_{l}^m(\theta, \varphi),
\end{align}
where  $Y_{l}^m$, for $l\geq 0$ and $-l\leq m\leq l$ are the spherical harmonics of order $l$. These functions  constitute an orthogonal basis of the space linear of $L^2(\p D)$ and satisfy  $\overline{Y_{l}^m}=(-1)^mY_{l}^{-m}$ (see \cite {book-Nedelec, book-Kirsch}).  The coefficients $ h_{l}^m$  in \eqref{coeff-hlm} are defined  by
\begin{align*}
\ds h_{l}^m=\int_{\p D} h(\theta, \varphi)\overline{ Y_{l}^{m}}(\theta, \varphi)d\sigma.
\end{align*}
For two integers $l$ and $m$ set
\begin{align*}
\ds u_{l,m}(r,\theta)=h^{(1)}_{l}(kr) Y_{l}^{m}(\theta, \varphi)\q \mbox{for } r>\rho,
\end{align*}
where $h^{(1)}_{l}$ is the spherical Hankel function of the first kind of order $l$.
Since  $u_{l,m}$  satisfies
\begin{align*}
(\Delta +k^2 )u_{l,m}=0\q \mbox{in }\RR^3\backslash \overline{D}, \q\q
\Big|\frac{\p u_{l,m}}{\p r}-ik u_{l,m}\Big|=O(1/r^{2})\q \mbox{as } r\rightarrow \infty.
\end{align*}
Set $u^s=u_{0,0}$ and $v^s=u_{l,m}$.  One can easily check  that
$$
\frac{\p u^{s}}{\p T}\frac{\p v^{s}}{\p T}+2\tau \pd{u^{s}}{\nu} v^{s}
-\pd{u^{s}}{\nu} \pd{v^{s}}{\nu}-k^2 u^s v^s=c_{l,m}( \rho, k) Y_{l}^{m}(\theta, \varphi) \q \mbox{on } \p D.
$$
Then  by measuring $[u^s_\ep, v^s , \Psi_\ep, D]$  in \eqref{main-Theorem-equality}, we can reconstruct $ h_{l}^{-m}$. This implies that the coefficients $h_l^m$ of $h$ can be determined by varying the test function $v^s=u_{l,m}$, provided
that the order of  magnitude of  $|h_l^m|$ is much larger than $\ep$.

\end{document}